\title{Lipschitz interior  regularity for the  viscosity  and weak solutions of the Pseudo $p$-Laplacian Equation. } 
\author{F. Demengel}
\date{Laboratoire AGM,  UMR 8088, University   of   Cergy Pontoise, France}
\newtheorem{theo}{Theorem}[section]
\newtheorem{prop}[theo]{Proposition}
\newtheorem{rema}[theo]{Remark}
\newtheorem{defi}[theo]{Definition}
\newtheorem{lemme}[theo]{Lemma}
\def\R{\mathbb  R}
\def\grad{\nabla}
\begin{document}

\maketitle
\begin{abstract}
We  consider the pseudo-$p$-Laplacian operator: 
 $\tilde \Delta_p u = \sum_{i=1}^N \partial_i (|\partial_i u|^{p-2}  \partial_i u)=(p-1) \sum_{i=1}^N |\partial_i u|^{p-2} \partial_{ii} u$ for $p>2$. 
 We prove interior regularity results  for the  viscosity (resp. weak) solutions  in the unit ball 
  $B_1$ 
  of $\tilde \Delta_p u =(p-1) f$   for  $f\in {\cal C} (\overline{B_1})$ (resp. $f\in L^\infty(B_1)$) : Firstly the H\"older local  regularity 
   for any   exponent $\gamma <1$, 
  recovering  in that way  a    known result  about weak solutions.   
   In a second  time we prove the Lipschitz  local  regularity. 
    \end{abstract}
    \section {Introduction}

     This paper is devoted to the local Lipschitz regularity for viscosity solutions of the equation 
     \begin{equation}\label{eq1}
     \sum_{i=1}^N \partial_i (|\partial_i u|^{p-2} \partial_i u) =(p-1) f
      \end{equation}
       The operator on the left hand side is known as the pseudo-$p$-Laplace operator, and the equation above is the Euler Lagrange equation associated to the energy functional 
    $$
        {1\over p}\int \sum_{i=1}^N |\partial_i u|^p +(p-1)\int fu. 
  $$

          Even if this equation seems very similar to the usual $p$-Laplace equation (existence of solutions, comparison theorems),  the usual methods to prove regularity results   cannot easily  be adapted here. This is  mainly due to the fact that the operator degenerates on non bounded sets in $\R^N$.   
          
          \bigskip
            
         Several directions have been taken by different  people, and except for Belloni and Kawohl in \cite{BK} , all the solutions they   studied were  weak solutions, i.e.  such that  $u\in  W^{1,p}_{loc}$ and the equation ( \ref{eq1}) is intended in the distribution sense.

          The first  regularity results for this type of equations may be found  in the pioneer  paper of  Uraltseva and Urdaletova, \cite{UU} :  Among other results they prove    that,  for $f$ replaced by $f(x,u) $ with some specific conditions of growth   with respect to  $u$,  and for $p>3$ , then the solutions are Lipschitz continuous. 
          The Lipschitz regularity in the case $p<2$, with a right hand side $f\in L^\infty$  can be derived from the techniques used  by Fonseca,  Fusco and Marcellini in \cite{FFM} :  
                   One must point out that   when $p<2$   the notion of viscosity solution cannot   easily be defined,  since the operator  has no meaning on  points $\bar x$ for which  some test function $\varphi$  satisfies $\partial_i \varphi(\bar x) = 0$. Therefore  it is not  immediate to    obtain Lipschitz regularity for   the solutions using viscosity   arguments as here.

           In \cite{BB} the authors studied (among other things  ) the Lipschitz regularity for  equations  as  
            $$\left\{ \begin{array}{lc}
            -\mu \sum_i \partial_i( (|\nabla u|^{p-1}-\delta)^+{\partial_i  u\over |\nabla u|} ) -\sum_i \partial_i (\partial_i u|^{p_i-2} \partial_i u) = f& {\rm in} \ \Omega\\
            u = \varphi \ {\rm on } \  \partial \Omega
            \end{array}\right.$$
           Here the $p_i $ are $>1$, $p>1$ and $\mu >0$.  They prove a Lipschitz regularity result under some bounded slop condition on $\varphi$. The  case where $\mu = 0$,  even when all the $p_i$ are equal to each other  is not covered by  their proofs.

             -The degenerate case $p\geq 2$  has been  much explored,  and almost all  the techniques involved  are   variational : In particular  the regularity  results are obtained using Moser's iteration method.

                          Among the  recent regularity results obtained,   let us cite the paper of  Brasco and Carlier \cite{BC}, which
                          proves that for the widely   degenerate anisotropic equation, arising in congested optimal transport 
                           
               \begin{equation}\label{eq3}
              \sum_{i=1} ^N \partial_i\left( (|\partial_i u|-\delta_i)_+^{p-1} {\partial_i u\over |\partial_i u|} \right)= (p-1)f.
               \end{equation} (here  the $\delta_i$ are non negative numbers,  and $f\in L^\infty_{loc}$),  
                the solutions are in $W^{1, q}_{loc} $ for any $q < \infty$. In particular it implies the H\"older's regularity  of the solutions for any exponent $\gamma<1$, by means of the Sobolev Morrey's embedding.

                In \cite{BK} Belloni and Kawohl are  interested in the first eigenvalue for the Dirichlet problem of the pseudo $p$-Laplace operator : 
                They prove  existence and  uniqueness  of the first eigenfunction, up to a multiplicative constant.

                                     \bigskip
                     
                      The most  complete and strongest results about regularity  concerns the  widely   degenerate anisotropic equation    in  (\ref{eq3}) :  In \cite{BBJ},  Bousquet, Brasco and Julin 
                      prove  the following  Lipschitz regularity result  :

                      {\em If $N=2 $,   for any $p\geq 2$,  and for $f\in W^{1,p^\prime}_{loc} $, $({1\over p}+ {1\over p^\prime}=1$), or if $N\geq 3$,  $p\geq 4$, and $f\in W^{1, \infty}_{loc}$,  then  every weak solution   of (\ref{eq3}) is   locally Lipschitz continuous.}

                          In particular their results include the case where $\delta_i=0$ for all $i$,  under the regularity assumption on  $f$ above.  Once more the techniques  involved are variational.                           
                          \bigskip
                          
                       In the present paper we consider ${\cal C}$-viscosity solutions of  (\ref{eq1}) : 
                        In fact the result for viscosity solutions will be a corollary of  the  stronger result  
                        \begin{theo}\label{th1bis}
                        For any $p>2$, for any $f, g\in {\cal C} (\overline{B_1})$ and for any $u$, USC  and $v$ LSC which  satisfy  in the viscosity sense 
                        $$ \sum_i |\partial_i u|^{p-2} \partial_{ii} u \geq f \  {\rm and } \  \sum_i |\partial_i v|^{p-2} \partial_{ii} v  \leq   g  $$
                          then for any $r<1$, there exists $c_r$ such that  for any $(x,y) \in B_r^2$
                          $$ u(x)-v(y) \leq \sup (u-v) + c_r |x-y|.$$
                          \end{theo}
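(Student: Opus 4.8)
\medskip
\noindent\textbf{Proof proposal.} The plan is a doubling-of-variables argument tailored to the anisotropic operator $F(p,M):=\sum_i|p_i|^{p-2}M_{ii}$. Fix $r<1$, pick $\rho\in(r,1)$, and let $\psi$ be an even $C^\infty$ modulus with $\psi(0)=\psi'(0)=0$, $0\le\psi'\le1$, $\psi$ nondecreasing and $0\le\psi(t)\le|t|$ on $[0,\infty)$, and — the key requirement — $\psi''\le-\theta<0$ for $t\in[\tau,2]$ with a small fixed $\tau$ (so $\psi$ is strictly concave away from a fixed convex ``core'' $(-\tau,\tau)$; note that no even $C^2$ function can be simultaneously nonnegative and concave near $0$, so such a core is unavoidable). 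For parameters $\Lambda,L>0$ and a smooth convex increasing barrier $\chi\ge0$ with $\chi\equiv0$ on $[0,r]$ and $\chi(s)\to+\infty$ as $s\to\rho^-$, put
\[
\Phi(x,y)=u(x)-v(y)-\sup_{B_1}(u-v)-L\sum_{i=1}^N\psi(x_i-y_i)-\Lambda\big(\chi(|x|)+\chi(|y|)\big).
\]
It suffices to show that, with $\theta$, then $\Lambda$, then $L$ chosen suitably, $M:=\sup_{B_\rho\times B_\rho}\Phi\le0$: evaluating at $x=x_0$, $y=y_0\in B_r$ kills the barrier terms and gives $u(x_0)-v(y_0)-\sup(u-v)\le L\sum_i\psi(x_{0,i}-y_{0,i})\le\sqrt N\,L\,|x_0-y_0|$, i.e.\ the theorem with $c_r=\sqrt N\,L$ (depending only on $r,p,N,\|f\|_\infty,\|g\|_\infty$ and the moduli of continuity of $u,v$ on $\overline{B_\rho}$).

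Assume $M>0$. The barrier makes the maximum attained at an interior point $(\bar x,\bar y)\in B_\rho^2$, and $\bar x\ne\bar y$ (otherwise $\Phi(\bar x,\bar y)\le u(\bar x)-v(\bar x)-\sup(u-v)\le0$). The test function $\varphi(x,y)=L\sum_i\psi(x_i-y_i)+\Lambda(\chi(|x|)+\chi(|y|))$ is $C^2$ near $(\bar x,\bar y)$, so the Theorem on Sums (Crandall--Ishii) yields $X,Y\in\Ss$ with $(D_x\varphi,X)\in\overline J^{2,+}u(\bar x)$, $(-D_y\varphi,Y)\in\overline J^{2,-}v(\bar y)$ and, for every $\mu>0$,
\[
\begin{pmatrix}X&0\\0&-Y\end{pmatrix}\le D^2\varphi(\bar x,\bar y)+\mu\,D^2\varphi(\bar x,\bar y)^2 .
\]
Up to the $O(\Lambda)$ contribution of the barrier, $D^2\varphi(\bar x,\bar y)$ is the direct sum over $i$ of the rank-one blocks $L\psi''(\bar x_i-\bar y_i)\begin{pmatrix}1&-1\\-1&1\end{pmatrix}$, while the gradients are $p^x_i=L\psi'(\bar x_i-\bar y_i)+O(\Lambda)$ and $p^y_i=L\psi'(\bar x_i-\bar y_i)+O(\Lambda)$. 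Choosing $\mu$ of order $1/L$ and testing the matrix inequality against $(e_i,-e_i)$, $(e_i,0)$, $(0,e_i)$ gives, for each \emph{active} coordinate $i$ — meaning $|\bar x_i-\bar y_i|\ge\tau$ — the bounds $X_{ii}\le-cL+O(\Lambda)<0$ and $Y_{ii}\ge cL-O(\Lambda)>0$; for the remaining coordinates only the crude bounds $|X_{ii}|,|Y_{ii}|\le CL$ (with $C$ depending on $\max|\psi''|$) are available.

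Feeding these into the viscosity inequalities $\sum_i|p^x_i|^{p-2}X_{ii}\ge f(\bar x)$ and $\sum_i|p^y_i|^{p-2}Y_{ii}\le g(\bar y)$ and subtracting,
\[
f(\bar x)-g(\bar y)\le\sum_{i=1}^N\Big(|p^x_i|^{p-2}X_{ii}-|p^y_i|^{p-2}Y_{ii}\Big).
\]
If some coordinate is active, say $i_0$, then $|p^{x}_{i_0}|,|p^{y}_{i_0}|\ge cL-O(\Lambda)$, so the $i_0$-term is $\le-c'L\cdot L^{p-2}=-c'L^{p-1}$; on the other coordinates one checks the contribution is dominated — this is where the convex core is handled: there $\psi''(\bar x_i-\bar y_i)$ may be large, but $\bar x_i\approx\bar y_i$ forces $|p^x_i|,|p^y_i|$ to be small, so the weights $|p^x_i|^{p-2},|p^y_i|^{p-2}$ damp the large Hessian entries. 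Hence for $L$ large the right-hand side is $<-(\|f\|_\infty+\|g\|_\infty)\le f(\bar x)-g(\bar y)$, a contradiction. The remaining case, that \emph{all} coordinates lie in the convex core, i.e.\ $|\bar x-\bar y|<\tau$, must be excluded separately — there the viscosity inequalities carry no usable second-order information, and one argues directly from the a priori modulus of continuity of $u,v$ supplied by the H\"older part of the paper (or lets $\tau=\tau(L)\to0$) to rule out $M>0$. This proves $M\le0$ and the theorem.

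The main obstacle is the anisotropic degeneracy: the operator controls only the diagonal entries $X_{ii}$, each through the weight $|p_i|^{p-2}$, which collapses in any coordinate direction with $p_i=0$. This is why the penalization \emph{must} be the coordinate-wise sum $\sum_i\psi(x_i-y_i)$, whose Hessian is block-diagonal, so that the strict concavity of $\psi$ is delivered into each of the $N$ directions the operator sees; an isotropic penalization $L\psi(|x-y|)$ fails, since its concavity produces a negative second-order term only along $(\bar x-\bar y)/|\bar x-\bar y|$ — which the operator ignores — while along the coordinate axes its Hessian is large and positive. The price of a coordinate-wise modulus is the convex core near the diagonal, and the heart of the argument is the quantitative balancing of the single ``good'' term $-c'L^{p-1}$ against all near-diagonal contributions, together with the exclusion of the all-core case. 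Finally $p>2$ enters twice: it makes $t\mapsto|t|^{p-2}$ continuous, so $F$ is a genuine continuous degenerate-elliptic operator and the equation has classical viscosity meaning, and it makes $L^{p-1}\to\infty$, which powers the contradiction.
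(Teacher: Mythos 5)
Your proposal is a genuinely different route from the paper's, and I want to flag two issues: a mistaken dismissal of the paper's mechanism, and a real gap that your construction does not close.

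First, the critique you offer of the isotropic penalization $M\omega(|x-y|)$ is not correct, and it is worth seeing why, because it is exactly the device the paper uses. You say its concavity is delivered ``only along $(\bar x-\bar y)/|\bar x-\bar y|$ --- which the operator ignores.'' But the operator does not ignore it. Writing $q_i\propto(\bar x-\bar y)_i$ and $\Theta_{ij}=|q_i/M|^{(p-2)/2}\delta_i^j$, the quantity the viscosity inequalities control is $\sum_i|q_i|^{p-2}(X_{ii}+Y_{ii})=M^{p-2}\operatorname{tr}(\Theta(X+Y)\Theta)$, which is the trace of $D^2g$ \emph{conjugated by the diagonal weight $\Theta$}, not of $D^2g$ itself. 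Since $\Theta$ has entries $\propto|(\bar x-\bar y)_i|^{(p-2)/2}$, the conjugation tilts the matrix toward the radial direction. The paper's Proposition~\ref{prop4} makes this precise by exhibiting the test vector $w$ with $w_i=|(\bar x-\bar y)_i|^{(2-p)/2}(\bar x-\bar y)_i$, for which $\Theta w\propto\bar x-\bar y$: along $w$ one reads off the \emph{radial} eigenvalue $\beta_H\omega''$, which is strongly negative. So the concavity of $\omega$ survives the degenerate anisotropic weights, and no coordinate-wise modulus (and hence no convex core) is needed. The extra care in the paper (the index set $I(x,\epsilon)$ for $p>4$, the $|x|^{1+\epsilon}$ thresholding) is precisely the quantitative control of how much of the radial concavity survives conjugation when some $|(\bar x-\bar y)_i|$ are very small.

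Second, the convex core $(-\tau,\tau)$ that you correctly identify as unavoidable for a $C^2$ coordinate-wise modulus $\psi$ produces the ``all-core'' case $|\bar x_i-\bar y_i|<\tau$ for every $i$, and this is a genuine gap, not a technicality to be waved away. In that regime $D^2\big(\sum_i\psi(x_i-y_i)\big)$ is positive semidefinite, the Theorem on Sums gives no negative diagonal entries, and one extracts only $f(\bar x)\le g(\bar y)+\text{error}$, no contradiction. The two remedies you sketch do not close it: the a priori H\"older estimate shows that as $L\to\infty$ the maximizer satisfies $|\bar x-\bar y|\lesssim L^{-1/(2-\gamma)}\to0$, which makes the all-core case \emph{more} likely, not less; and letting $\tau=\tau(L)\to0$ does not help because the H\"older bound drives $|\bar x-\bar y|$ to zero at least as fast. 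You cannot rule out, at fixed $L$, a positive maximum at $|\bar x-\bar y|<\tau\sqrt N$. The paper avoids the issue entirely because $\omega$ is concave on all of $(0,s_0)$ with $\omega(0)=0$: once one shows $\bar x\neq\bar y$, there is no convex core and every maximizer falls into the good regime. So while the coordinate-wise penalization is an attractive idea and correctly identifies that the ellipticity must be injected coordinate by coordinate, as written the argument is incomplete at precisely the place the paper's linear algebra (Propositions~\ref{prop4} and~\ref{prop5}) does the hard work.
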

                           In particular the Lipschitz regularity result holds  true  for $u$ if $u$ is  both a sub and a super-solution of the equation,  even with some right hand sides differents. 
               Furthermore  the  Lipschitz continuity for solutions of (\ref{eq1}) requires only that  the right hand side    be    continuous.

                  The second important advantage of  the methods here employed is that they   can be applied  to study regularity of Fully Non Linear Operators on the model of the pseudo $p$-Laplace operator, but not under divergence form. 
                  This  will be   done  in \cite{BD2}. 
                                      
                         We will derive the local Lipschitz regularity for $W^{1,p}_{loc} $ solutions and $f\in L^\infty_{loc}$,  from the one for viscosity  solutions and $f$ continuous. 
                         
                          We  hope  that the method  here employed  could be used to treat the case $p\leq 4$ and $N\geq 3$ , not covered
                           at this day, to my knowledge,  by the results of Bousquet,  Brasco and Julin, \cite{BBJ}  for the  widely  degenerate equation   (\ref{eq3}), as well as to weaken the  regularity of $f$ in \cite{BBJ}, but of course the high degeneracy  of (\ref{eq3})  brings additional technical difficulties.  
                           
                  A  further question we ask  is : 
                    {\rm does the   ${\cal C}^1$  or ${\cal C}^{1, \beta} $ regularity holds, as in the case of the classical $p$Laplacian, \cite{T}, \cite{DB}}? A first step would consist in proving the ${\cal C}^1$ regularity when the right hand side is zero and then deduce from it the case $f\neq 0$ by methods as in \cite{IS}, \cite{BD3}, but even  in the case $f\equiv 0$  i have no intuition about the truthfullness of this result.   The usual methods  in the theory of viscosity solutions, (\cite{CC}, \cite{IL}),  cannot  directly be  applied   to the present case, one of the key argument  of  their proofs being the  uniform ellipticity of the operator.     Likewise, the methods of Figalli and Colombo  \cite{CF} to prove the regularity  outside of the degeneracy set of the operator,  suppose that this  set  is bounded, which is not the case here.  
                     One must find  new arguments.

                       Another probable extension  of the results included here  consists  in  proving that the  Lipschitz regularity still   holds  when  $f\in L^k$  for  $k>N$, using $L^k$-viscosity solutions (\cite{CKLS}) in place of ${\cal C}$-viscosity solutions.                          This  could be the object of a future work.      

  \bigskip
  
 As we saied before in particular we deduce from Theorem \ref{th1bis}  the following  result  : 
  
   \begin{theo}\label{th1}
      
     For any $p> 2$  and for all  $r<1$,  there exists $C$ depending on $( p, N, r)$ such that  for any $u$  a 
      ${\cal C}$-viscosity  (respectively weak), bounded  solution in $B_1$  of (\ref{eq1}) ,   with $f\in {\cal  C} (\overline{B_1})$ (respectively $f\in L^\infty (B_1)$),   
      $$ {\rm Lip}_{B_r}  u \leq C (|u|_{L^\infty(B_1)} + |f|_{L^\infty (B_1)} ^{1\over p-1}). $$

      \end{theo}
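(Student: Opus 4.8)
The statement will follow from Theorem \ref{th1bis} in the $\mathcal{C}$-viscosity case and from an approximation procedure in the weak case; the only genuinely technical point lies in the latter.

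\emph{The $\mathcal{C}$-viscosity case.} Here Theorem \ref{th1} is essentially a corollary of Theorem \ref{th1bis}, the sole work being to turn the qualitative constant $c_r$ of that theorem into the explicit homogeneous bound stated here. If $u$ is a bounded $\mathcal{C}$-viscosity solution of $(\ref{eq1})$ in $B_1$ with $f\in\mathcal{C}(\overline{B_1})$, then $u$ is simultaneously a viscosity sub- and super-solution of $\sum_i|\partial_i u|^{p-2}\partial_{ii}u=f$, so Theorem \ref{th1bis} applied with $v=u$, $g=f$ gives, for each $r<1$, a constant $c_r$ with $u(x)-u(y)\le\sup(u-u)+c_r|x-y|=c_r|x-y|$ on $B_r^2$, hence $\Lip_{B_r}u\le c_r$. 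To fix the dependence on the data, first normalize: write $M=|u|_{L^\infty(B_1)}$, $K=|f|_{L^\infty(B_1)}$, assume $M+K^{1/(p-1)}>0$ (otherwise $u\equiv0$), and set $\tilde u=(M+K^{1/(p-1)})^{-1}u$. Since $\partial_i$ and $\partial_{ii}$ are linear, $\tilde u$ is a $\mathcal{C}$-viscosity solution of the same equation with right-hand side $\tilde f=(M+K^{1/(p-1)})^{-(p-1)}f\in\mathcal{C}(\overline{B_1})$, and $|\tilde u|_{L^\infty(B_1)}\le1$, $|\tilde f|_{L^\infty(B_1)}\le1$. Applying the previous step to $\tilde u$ yields $\Lip_{B_r}\tilde u\le c_r$ with $c_r$ now depending only on $(p,N,r)$, the data being universally bounded; this is the one place where one must inspect the proof of Theorem \ref{th1bis} to check that its constant is non-decreasing in $|f|_{L^\infty}$ and $|u|_{L^\infty}$. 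Undoing the normalization gives $\Lip_{B_r}u\le c_r\,(|u|_{L^\infty(B_1)}+|f|_{L^\infty(B_1)}^{1/(p-1)})$, i.e. the claim with $C=c_r$.

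\emph{The weak case.} The plan is approximation together with the fact, to be used as a lemma, that a weak solution of $(\ref{eq1})$ with continuous right-hand side is also a $\mathcal{C}$-viscosity solution (the analogue for the pseudo-$p$-Laplacian of the Juutinen--Lindqvist--Manfredi equivalence). Let $u\in W^{1,p}_{loc}(B_1)$ be a bounded weak solution with $f\in L^\infty(B_1)$, fix $r<r'<1$, and mollify: $f_n=f*\rho_{1/n}\in\mathcal{C}(\overline{B_{r'}})$ with $|f_n|_{L^\infty}\le|f|_{L^\infty(B_1)}$ and $f_n\to f$ in $L^q(B_{r'})$ for every $q<\infty$; let $u_n\in u+W^{1,p}_0(B_{r'})$ be the weak solution of $\tilde\Delta_p u_n=(p-1)f_n$ in $B_{r'}$ (existence and uniqueness from strict convexity of the energy). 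By the lemma above $u_n$ is a $\mathcal{C}$-viscosity solution, so the viscosity case gives $\Lip_{B_r}u_n\le C\,(|u_n|_{L^\infty(B_{r'})}+|f_n|_{L^\infty}^{1/(p-1)})$. Comparison with explicit barriers --- functions of the form $A\pm B\sum_{i=1}^N(2+x_i)^{p/(p-1)}$, which are smooth and critical-point-free on $B_{r'}$ and on which the operator equals a nonzero constant proportional to $B^{p-1}$ --- gives $|u_n|_{L^\infty(B_{r'})}\le|u|_{L^\infty(B_1)}+C'|f|_{L^\infty(B_1)}^{1/(p-1)}$ uniformly in $n$, so $(u_n)$ is uniformly Lipschitz on $B_r$. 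By Arzel\`a--Ascoli a subsequence converges uniformly on $\overline{B_r}$ to a Lipschitz function with Lipschitz constant $\le C(|u|_{L^\infty(B_1)}+|f|_{L^\infty(B_1)}^{1/(p-1)})$; and subtracting the weak formulations of $u_n$ and $u$, testing against $u_n-u\in W^{1,p}_0(B_{r'})$ and using the componentwise monotonicity of $\xi\mapsto|\xi|^{p-2}\xi$ together with Poincar\'e's inequality shows $\|u_n-u\|_{W^{1,p}(B_{r'})}\to0$, hence $u_n\to u$ a.e., so the uniform limit coincides a.e. with $u$. Thus $u$ has a Lipschitz representative in $B_r$ obeying the stated bound.

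\emph{The hard part.} In the viscosity part the only subtlety is the bookkeeping of the constant of Theorem \ref{th1bis}, which is routine once the homogeneity of the operator has been used. The substance is in the weak part, specifically in the implication ``weak solution with continuous right-hand side $\Rightarrow$ $\mathcal{C}$-viscosity solution'': because the pseudo-$p$-Laplacian degenerates on the unbounded set $\{x:\partial_i u(x)=0\ \text{for some }i\}$, the standard argument must be carried out carefully at points where a test function has a vanishing partial derivative, and the barriers furnishing the uniform $L^\infty$ bound must be chosen --- as above, staying away from the coordinate hyperplanes --- so as not to be defeated by this degeneracy. The remaining ingredients (mollification, stability via convexity, Arzel\`a--Ascoli) are standard.
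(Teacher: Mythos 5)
Your argument is correct and follows the same overall route as the paper: reduce the $\mathcal{C}$-viscosity case to Theorem \ref{th1bis} by the $(p-1)$-homogeneous normalization (this is exactly the paper's ``Conclusion'' at the end of Section~3), and handle the weak case by an approximation through Dirichlet problems with mollified right-hand side, coupled with a uniform $L^\infty$ bound coming from a barrier and comparison, and with the fact that weak solutions with continuous data are $\mathcal{C}$-viscosity solutions (Proposition \ref{viscoweak}). The differences are in implementation rather than in the key lemmas. (i) For the uniform $L^\infty$ bound (the paper's Proposition \ref{prop1}) you use the anisotropic barrier $A\pm B\sum_i(2+x_i)^{p/(p-1)}$, which is smooth with nonvanishing partials on $B_1$, while the paper uses the radial barrier $|v|_\infty+M(1-(1+d)^{-1})$ with $d(x)=1-|x|$ and has to deal separately with the singularity at the origin; your choice sidesteps that step at no cost. (ii) For the convergence $u_n\to u$ in $W^{1,p}$ you test the difference of the weak formulations with $u_n-u$ and use the coercive monotonicity $(|s|^{p-2}s-|t|^{p-2}t)(s-t)\gtrsim|s-t|^p$ plus Poincar\'e, which is perhaps cleaner and more quantitative than the paper's argument via energy minimization and weak lower semicontinuity; both work. (iii) You work on $B_{r'}$ with $f_n=f*\rho_{1/n}$, whereas the paper works on $B_1$ with an abstract sequence $f_\epsilon\rightharpoonup f$ weak-$*$ in $L^\infty$; again equivalent. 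You correctly isolate Proposition \ref{viscoweak} as the one substantive lemma borrowed and note the source of its difficulty; the paper proves it by a by-now-standard contradiction/integration-by-parts argument (following \cite{BK}), and it would complete your write-up to reproduce that short proof.
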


The plan of this paper is as follows  : 
 
 In Section 2, we recall  some basic facts about viscosity solutions. We  give the material for deducing from the Lipschitz regularity result for viscosity solutions and  a right hand side continuous,  that the same  holds true for weak solutions and $f\in L^\infty_{loc}$.    In Section 3 we prove  Lipschitz regularity  estimates between  viscosity sub- and super-solutions, ie the content of Theorem \ref{th1bis}.    
 \bigskip

 \section{Weak solutions and viscosity solutions}
 \subsection{About viscosity solutions}
 {\em Notations : In all the paper $B_r$ denotes the open ball of center $0$ and radius $r$. 
    $x$   (respectively $\left(\begin{array}{cc} x\\y\end{array}\right)$) denotes  a vector column in $\R^N$ (respectively a column in  $\R^{N}\times \R^N$), while 
    $^t x$, (respectively $(^t x, ^t y)$) denotes a  vector line in $\R^N$ (respectively a vector line  in $\R^{N}\times \R^N$).  For $x$ and $y\in \R^N$, we denote  the scalar product  of $x$ and $y$ by  $\langle x, y
\rangle $ or $^t x y$ or $x\cdot y$.  For $x\in \R^N$, $|x|$ denotes 
    the euclidian norm $|x| = (x \cdot x)^{1\over 2}  = (\sum_{i=1}^N |x_i|^2)^{1\over 2}$.    $S$ is the space of symmetric matrices on $\R^N$.  For $A\in S$, we define   the norm 
     $|A| = \sup _{x\in \R^N, |x|=1} |^t x A x|$ or equivalently $|A|= \sup_{1\leq i\leq N} |\lambda_i(A)| $  where  the $\lambda_i(A)$ are the eigenvalues of $A$.  We recall that $X\leq Y$ when $X$ and $Y$ are in $S$,  means that 
     $Y-X \geq 0$ i.e  for all $x\in \R^N$, $^t x (Y-X) x \geq 0$, or equivalently  $\inf_{1\leq i\leq N} \lambda_i (Y-X) \geq 0$.}

  Let us recall the definition of ${\cal C}$- viscosity solutions for Elliptic Second Order Differential Operators : 
  
   Let $F$ be continuous  on $\Omega \times \R \times \R^N\times S$,    where $\Omega$ denotes an open subset in $\R^N$.   We consider the "equation" 
   
   $$F(x, u, Du, D^2 u) = 0. $$ 
  
   \begin{defi}
   
 $u$,  lower-semicontinuous (LSC) in $\Omega$ is a  ${\cal C}$-viscosity super-solution of 
   $F(x, u, Du, D^2 u) = 0$ in $\Omega$ 
    if,  for any $x_o \in \Omega$ and any $\varphi$,   ${\cal C}^2 $ around $x_o$ which satisfies  
    $(u-\varphi)(x) \geq (u-\varphi)(x_o) =0 $  in a neighborhood of $x_o$, one has 
 $F(x_o, \varphi(x_o), D\varphi(x_o), D^2 \varphi(x_o)) \leq 0$. 
      
      $u$  upper-semicontinuous (USC), is a   ${\cal C}$-viscosity sub-solution  of  
      
      $F(x, u, Du, D^2 u) = 0$ in $\Omega$  if    for any $x_o \in \Omega$ and any $\varphi$,  
       ${\cal C}^2 $ around $x_o$ which satisfies  
    $(u-\varphi)(x) \leq (u-\varphi)(x_o) =0 $  in a neighborhood of $x_o$, one has 
 $F(x_o, \varphi(x_o), D\varphi(x_o), D^2 \varphi(x_o)) \geq 0$.

     $u$ is a solution if  it is both a super- and a sub-solution. 
            \end{defi}

       It is classical  in the theory of Second Order Elliptic Equations that one can   work with  semi-jets, and closed semi-jets  in place of ${\cal C}^2$ functions. For the convenience of the reader we recall their definition :   
       
        \begin{defi}        
          Let $u$ be an  upper semi-continuous function  in a neighbourhood of $\bar x$. Then we define the  super-jet $(q, X)\in \R^N \times S$  and we note 
          $ (q, X)\in J^{2,+}u(\bar x)$ if  there exists $r>0$ such that 
          for all $x\in B_r(\bar x)$, 
          $$u(x) \leq u(\bar x)+ \langle q, x-\bar x\rangle + {1\over 2} \ ^t (x-\bar x) X (x-\bar x) + o(|x-\bar x|^2).$$
          
           Let $u$ be a lower  semi-continuous  function  in a neighbourhood of $\bar x$. Then we define the sub-jet $(q, X)\in \R^N \times S$  and we note 
          $ (q, X)\in J^{2,-}u(\bar x)$ if  there exists $r>0$ such that
          for all $x\in B_r(\bar x)$, 
          $$u(x) \geq u(\bar x)+ \langle q, x-\bar x\rangle + {1\over 2} \ ^t (x-\bar x) X (x-\bar x) + o(|x-\bar x|^2).$$
        
        We also define  the "closed semi-jets" :
           
           \begin{eqnarray*}
            \bar J^{2,\pm } u(\bar x) &=&\{ (q, X), \exists \ (x_n,q_n, X_n), \ ( q_n, X_n) \in J^{2,\pm } u (x_n)\ \\
            &&{\rm and} 
           \ (x_n, q_n, X_n) \rightarrow (\bar x, q, X)\}.
           \end{eqnarray*}

          \end{defi}

       We  refer to the survey of Ishii \cite{I1}, and  to \cite{usr} for more  complete results about semi-jets:  The link between semi-jets and  test functions for sub- and super-solutions is the following : 
       
        {\em $ u$,  USC is  a sub-solution  if and only if for any $\bar x$ and for any  $(q, X) \in \bar J^{2,+} u(\bar x)$, then }
        \begin{equation}\label{altdef}
         F(\bar x , u(\bar x), q, X) \geq0
         \end{equation}
         and    the same with analogous changes is valid for super-solutions.

       Let us now recall 
          Lemma 9 in \cite{I1} and one of its consequences for the proofs in the present paper
         
             \begin{lemme}\label{lem1}
         Suppose that  $A$ is  a symmetric matrix on $\R^{2N}$ and that $U \in USC (\R^N)$, $V\in USC (\R^N)$ satisfy 
         $U(0)= V(0)$ and for all $(x,y)\in( \R^N)^2$
         $$U(x)+ V(y) \leq {1\over 2} (^tx,^ty)A \left(\begin{array}{c}
         x\\
         y\end{array}\right).$$
          Then for all $\iota>0$ there exist $X^U_\iota \in S$, $X^V_\iota \in S$ such that 
          $$(0, X^U_ \iota) \in \bar J^{2,+} U(0), \ (0, X^V_\iota)\in \bar J^{2,+} V(0)$$
           and 
           \begin{equation}\label{eqiota} -({1\over \iota} + |A|) \left(\begin{array}{cc}
           I &0\\
           0&I\end{array}\right) \leq \left(\begin{array}{cc}
           X^U_\iota&0\\
           0& X^V_\iota
           \end{array}\right)\leq (A+\iota A^2).
           \end{equation}
           \end{lemme}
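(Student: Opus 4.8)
This is the classical ``theorem on sums'' of viscosity--solution theory, and the plan is to reproduce its standard proof (\cite{usr}, \cite{I1}); nothing in it is specific to the pseudo--$p$--Laplacian, the statement being purely finite--dimensional. First I would record that $z=0$ is a global maximum of $W(z):=U(x)+V(y)-\frac12\langle Az,z\rangle$ on $\R^{2N}$: indeed $W\le 0$ everywhere, while $W(0)=U(0)+V(0)=0$, the bound being tight at the origin and $U(0)=V(0)$.

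The first real step is regularization by sup--convolution. For $\mu>|A|$ I would set $U^\mu(x)=\sup_\xi\{U(\xi)-\frac\mu2|x-\xi|^2\}$ and likewise $V^\mu$, so that $U^\mu,V^\mu$ are finite, semiconvex (the functions $U^\mu+\frac\mu2|\cdot|^2$ and $V^\mu+\frac\mu2|\cdot|^2$ are convex), and decrease pointwise to $U,V$ with $U^\mu\ge U$, $V^\mu\ge V$. Since the two groups of variables are separated, the sup--convolution in $\R^{2N}$ of $z\mapsto U(x)+V(y)$ is exactly $U^\mu(x)+V^\mu(y)$; applying it to both sides of $U(x)+V(y)\le\frac12\langle Az,z\rangle$ and using that the sup--convolution of $z\mapsto\frac12\langle Az,z\rangle$ is $z\mapsto\frac12\langle A_\mu z,z\rangle$ with $A_\mu:=A(I-\frac1\mu A)^{-1}\to A$, I would obtain
$$U^\mu(x)+V^\mu(y)\le\tfrac12\langle A_\mu(x,y),(x,y)\rangle\qquad\text{on }\R^{2N},$$
still with a global maximum at the origin.

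To produce semijets I would then combine Jensen's lemma with Alexandrov's theorem. Fix $\sigma>0$; then $z\mapsto U^\mu(x)+V^\mu(y)-\frac12\langle(A_\mu+\sigma I)z,z\rangle$ is semiconvex with a strict global maximum at $0$, so by Jensen's lemma, for every small $r$ the set of $\hat z\in B_r$ at which, for some vector $q$ with $|q|$ small, this function plus $\langle q,\cdot\rangle$ has a local maximum has positive Lebesgue measure; since $U^\mu,V^\mu$ are twice differentiable almost everywhere (Alexandrov), I may select such a $\hat z=(\hat x,\hat y)$ at which moreover $U^\mu$ is twice differentiable at $\hat x$ and $V^\mu$ at $\hat y$. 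The first-- and second--order maximum conditions there, together with the block--diagonal structure of the Hessian of $(x,y)\mapsto U^\mu(x)+V^\mu(y)$, give $DU^\mu(\hat x),DV^\mu(\hat y)=O(r+|q|)$ and, with $\hat X:=D^2U^\mu(\hat x)$, $\hat Y:=D^2V^\mu(\hat y)$,
$$-\mu\left(\begin{array}{cc}I&0\\0&I\end{array}\right)\le\left(\begin{array}{cc}\hat X&0\\0&\hat Y\end{array}\right)\le A_\mu+\sigma I,$$
the lower bound being the semiconvexity bound $D^2U^\mu\ge-\mu I$, $D^2V^\mu\ge-\mu I$.

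Finally I would transfer this back to $U,V$ and pass to the limit. By the elementary ``magic'' property of sup--convolutions, if $U^\mu$ is twice differentiable at $\hat x$ then $(DU^\mu(\hat x),\hat X)\in J^{2,+}U(\hat x+\frac1\mu DU^\mu(\hat x))$, and likewise for $V$; letting $r\to0$, then $|q|\to0$, then $\sigma\to0$, and extracting from the (jointly bounded) matrices $\hat X,\hat Y$, the base points and gradients tend to $0$ while $\hat X\to X^U_\mu$, $\hat Y\to X^V_\mu$ with $-\mu I\le\mathrm{diag}(X^U_\mu,X^V_\mu)\le A_\mu$; the closedness built into the definition of $\bar J^{2,\pm}$ then yields $(0,X^U_\mu)\in\bar J^{2,+}U(0)$ and $(0,X^V_\mu)\in\bar J^{2,+}V(0)$. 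It then remains to choose $\mu=\frac1\iota+|A|$ and check, eigenvalue by eigenvalue, that $A_\mu=A(I-\frac1\mu A)^{-1}\le A+\iota A^2$, so that $X^U_\iota:=X^U_\mu$, $X^V_\iota:=X^V_\mu$ satisfy exactly (\ref{eqiota}). I expect the genuinely delicate points to be the measure--theoretic selection in the Jensen/Alexandrov step and the coordinated bookkeeping of the parameters $r,|q|,\sigma,\mu$ needed to land precisely on the two--sided bound (\ref{eqiota}); for complete details one may simply quote Theorem~3.2 of \cite{usr} or Lemma~9 of \cite{I1}.
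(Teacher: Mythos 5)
Your proof is correct and follows exactly the approach the paper relies on: the paper does not reprove this statement but quotes it as Lemma 9 of Ishii's survey \cite{I1}, remarking only that the proof proceeds by sup- and inf-convolution, which is precisely the standard sup-convolution/Jensen--Alexandrov/magic-property argument you carried out, including the correct parameter choice $\mu=\frac{1}{\iota}+|A|$ that yields $A_\mu=A(I-\frac{1}{\mu}A)^{-1}\le A+\iota A^2$ eigenvalue by eigenvalue. One small point: as you implicitly did, the hypothesis must be read as $U(0)=V(0)=0$ (so that the origin is a maximum point with value $0$, as in Ishii's statement); with only $U(0)=V(0)$ the conclusion can fail, so your silent normalization is the right reading of the lemma.
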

                     The proof of Lemma \ref{lem1} uses the approximation of $U$ and $V$ by Sup and Inf convolution. 
         This lemma has the following consequence  for the results of this paper : 
         
         \begin{lemme} \label{lem2}
         
                      Suppose that $u$ and $v$ are respectively  USC and LSC and satisfy for some constant $M>1$ and   for some function $\Phi$ which is ${\cal C}^2$ around $(\bar x, \bar y)$
            \begin{eqnarray*}
            u(x)-v(y) -M  |x-x_o|^2 -M  |y-x_o|^2- \Phi(x,y)&\leq & u(\bar x)-v(\bar y) -M |\bar x-x_o|^2\\
            & -&M |\bar y-x_o|^2
            - \Phi(\bar x, \bar y).
            \end{eqnarray*}
           Then for any $\iota$,   there exist $X_\iota ,Y_\iota$ such that 
               $$ (D_1 \Phi(\bar x, \bar y)+ 2M (\bar x-x_o), X_\iota) \in \bar J^{2,+} u(\bar x), \ 
               (-D_2 \Phi(\bar x, \bar y) -2M (\bar y-x_o), -Y_\iota) \in \bar J^{2,-} v(\bar y)$$
                with 
                    \begin{equation}\label{eqiota} -({1\over \iota} + |A|+1 ) \left(\begin{array}{cc}
           I &0\\
           0&I\end{array}\right) \leq \left(\begin{array}{cc}
           X_\iota-2MId&0\\
           0& Y_\iota-2M Id
           \end{array}\right)\leq (A+\iota A^2)    +  \left(\begin{array}{cc}
           I &0\\
           0&I\end{array}\right)         \end{equation}
            and $A = D^2 \Phi(\bar x, \bar y)$.
            \end{lemme}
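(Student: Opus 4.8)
The idea is to absorb the quadratic penalizations into the two semicontinuous functions and to keep $\Phi$ as the test function, so that the statement becomes an instance of the ``theorem on sums'' (Lemma~\ref{lem1}). Since $v$ is LSC, $-v$ is USC, hence
$$u_1(x):=u(x)-M|x-x_o|^2,\qquad u_2(y):=-v(y)-M|y-x_o|^2$$
are USC near $\bar x$, resp.\ near $\bar y$, and the hypothesis says exactly that $(\bar x,\bar y)$ is a (local) maximum point of $(x,y)\mapsto u_1(x)+u_2(y)-\Phi(x,y)$; set $A=D^2\Phi(\bar x,\bar y)$, a symmetric matrix on $\R^{2N}$.

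Next I would feed this into Lemma~\ref{lem1}. Since the latter is stated in pure quadratic form, one first runs the classical reduction: replace $\Phi$ by $\Phi(x,y)+|x-\bar x|^4+|y-\bar y|^4$ — which changes neither $D\Phi$ nor $A=D^2\Phi$ at $(\bar x,\bar y)$ but makes the maximum strict — translate the maximum to the origin, subtract the affine part of $\Phi$ there, Taylor-expand the smooth part, and use a cut-off to turn the quadratic majorant, which is a priori valid only on a small ball, into a global one, at the cost of an arbitrarily small perturbation $\eta\,\mathrm{diag}(Id,Id)$ of the quadratic form; one then lets $\eta\to0$, using that for fixed $\iota$ the output matrices stay bounded (so converge along a subsequence) and that closed semi-jets are closed. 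Lemma~\ref{lem1} thus produces $X^U_\iota,X^V_\iota\in S$ with $(0,X^U_\iota)\in\bar J^{2,+}[\,u_1(\cdot+\bar x)-u_1(\bar x)-\langle D_1\Phi(\bar x,\bar y),\cdot\rangle\,](0)$, the analogous inclusion for $u_2$ at $\bar y$, and
$$-\Big(\frac{1}{\iota}+|A|\Big)\,\mathrm{diag}(Id,Id)\ \le\ \mathrm{diag}(X^U_\iota,X^V_\iota)\ \le\ A+\iota A^2 .$$
(If one is content to quote the $\mathcal C^2$-test-function form of the theorem on sums, cf.\ \cite{I1, usr}, this whole paragraph collapses to a single application to $u_1,u_2,\Phi$.)

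It remains to undo the translation and the penalizations. If $(0,X)\in\bar J^{2,+}[\,u_1(\cdot+\bar x)-u_1(\bar x)-\langle D_1\Phi(\bar x,\bar y),\cdot\rangle\,](0)$, then, using $M|x-x_o|^2-M|\bar x-x_o|^2=2M\langle\bar x-x_o,x-\bar x\rangle+M|x-\bar x|^2$, unfolding the definition of $u_1$ gives $(\,D_1\Phi(\bar x,\bar y)+2M(\bar x-x_o),\ X+2M\,Id\,)\in\bar J^{2,+}u(\bar x)$, so $X_\iota:=X^U_\iota+2M\,Id$ does the job for $u$. The same computation for $u_2=-v-M|\cdot-x_o|^2$, together with the equivalence $(q,Y)\in\bar J^{2,+}(-v)(\bar y)\iff(-q,-Y)\in\bar J^{2,-}v(\bar y)$, yields $Y_\iota:=X^V_\iota+2M\,Id$ with $(\,-D_2\Phi(\bar x,\bar y)-2M(\bar y-x_o),\ -Y_\iota\,)\in\bar J^{2,-}v(\bar y)$. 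Finally $\mathrm{diag}(X_\iota-2M\,Id,\ Y_\iota-2M\,Id)=\mathrm{diag}(X^U_\iota,X^V_\iota)$, so the displayed bound above is exactly the asserted inequality, the extra $+1$ and the extra identity matrix there being pure slack.

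The only genuinely delicate step is the localization performed in the second paragraph: passing from a local maximum together with a Taylor estimate valid on a small ball to a global quadratic majorant, which is what Lemma~\ref{lem1} requires. This is precisely the standard technical core of the theorem on sums (strict maximum via a quartic perturbation, a cut-off, and removal of the $\eta$-slack by compactness); everything else is bookkeeping of the affine and quadratic shifts introduced by the penalizations $M|\cdot-x_o|^2$.
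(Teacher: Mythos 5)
Your proof is correct and follows essentially the same route as the paper: absorb the $M|\cdot-x_o|^2$ terms into $u$ and $-v$, translate and linearize $\Phi$ at $(\bar x,\bar y)$, apply Lemma~\ref{lem1} with $A=D^2\Phi(\bar x,\bar y)$, then undo the shifts to get $X_\iota=X^U_\iota+2M\,Id$, $Y_\iota=X^V_\iota+2M\,Id$. The only cosmetic difference is the treatment of the Taylor slack: the paper keeps an $\epsilon\,Id$ perturbation and picks $\epsilon$ small enough that the extra terms are dominated by the $+1$ and $+Id$ in the statement, whereas you remove the slack entirely by a compactness argument in $\eta\to0$, which yields the (slightly stronger) exact bound and makes those $+1$ terms genuinely superfluous.
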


               \begin{proof}
              By Taylor's formula    at the order $2$ for $\Phi$,  for all $\epsilon  >0$  there exists   $r>0$ such that  
               for $|x-\bar x| ^2 + |\bar y -y|^2 \leq r^2$
                \begin{eqnarray*}
                u(x)-\langle D_1\Phi(\bar x, \bar y)&+&2M(\bar x-x_o), x-\bar x\rangle -v(y)- \langle D_2 \phi(\bar x, \bar y)+2M(\bar y-x_o)  , y-\bar y\rangle  \\
                &-&u(\bar x)+v(\bar y) \\
                &\leq &{1\over 2} \left(^t (x-\bar x), ^t (y-\bar y) \right)(D^2 \Phi(\bar x, \bar y)+\epsilon  Id )  \left( \begin{array}{c} x-\bar x\\
                y-\bar y\end{array}\right)\\
                &+& M   (|x-\bar x|^2 + |y-\bar  y|^2). 
                \end{eqnarray*}

          We define 
                      $$U(x) = u(x+ \bar x)-\langle D_1\Phi(\bar x, \bar y)+2M(\bar x-x_o), x\rangle -u(\bar x) -M |x|^2,$$
                 $$ V(y) = -v(y+ \bar y) - \langle D_2 \Phi(\bar x, \bar y)+2M(\bar y-x_o)  , y\rangle + v(\bar y)-M |y|^2 $$
                in the  closed ball  $|x-\bar x| ^2 + |\bar y -y|^2 \leq r^2$,  extend $U$ and $V$ by some convenient negative constants   in the complementary (see  \cite{I1} for details ) and apply Lemma \ref{lem1}. Note that

                $$ (0, X_\iota^U) \in \overline{J}^{2,+} U(0), \ (0, X_\iota^V ) \in \overline{J}^{2,-} V(0)$$ is equivalent to 
                
                 $$(D_1 \Phi(\bar x, \bar y) + 2M (\bar x-x_o), X_\iota^U + 2MId) \in  \overline{J}^{2,+} u(\bar x)$$
                  and 
                  $$ (-D_2 \Phi (\bar  x, \bar y)-2M (\bar y-x_o), -X_\iota^V - 2MId )\in \overline{J}^{2,-} v(\bar y)$$
                  hence one obtains    that 
                  for any $\iota$ there exists 
                  $(X_\iota, Y_\iota) $ such that

                  $(D_1 \Phi(\bar x, \bar y)+2M(\bar x-x_o), X_\iota )\in \bar J^{2,+} u(\bar x)$, 
                  $ (-D_2 \Phi(\bar x, \bar y) -2M (\bar y-x_o), -Y_\iota ) \in \bar J^{2,-}v(\bar y)$
                   and  taking $\epsilon $ such that 
                   $2 \epsilon  \iota |A| +  \epsilon+ \iota (\epsilon )^2 < 1$    
                     \begin{equation}\label{eqiota} -({1\over \iota} + |A|+1 ) \left(\begin{array}{cc}
           I &0\\
           0&I\end{array}\right) \leq \left(\begin{array}{cc}
           X_\iota-2MId&0\\
           0& Y_\iota-2M Id
           \end{array}\right)\leq (A+\iota A^2)+  \left(\begin{array}{cc}
           I &0\\
           0&I\end{array}\right)  .
           \end{equation}
           
            where 
            $A = D^2 \Phi(\bar x, \bar y)$. 
            \end{proof}

            In the sequel we will use  Lemma \ref{lem2} with  $\Phi(x,y)$ of the form 
             $ \Phi(x,y) =Mg (x-y) =M \omega (|x-y|)$, where $g$ is defined on $\R^N$  and $\omega$  on $\R^+$ will be   defined later and then  noting $H_1(x) = D^2 g(x)$,  defining $\iota = {1\over 4M |H_1(x)}$, and $\tilde H(x) = H_1 (x)+ 2\iota  H_1^2(x)$ :  
             $$ A =M \left(\begin{array}{cc} H_1(\bar x-\bar y)& -H_1(\bar x-\bar y)\\
             -H_1(\bar x-\bar y)& H_1(\bar x-\bar y)\end{array} \right) \ {\rm and } \ A + \iota A^2 =    M\left(\begin{array}{cc} \tilde H(\bar x-\bar y)& -  \tilde H(\bar x-\bar y)\\
             -\tilde H(\bar x-\bar y)&  \tilde H(\bar x-\bar y)\end{array} \right)$$ 
              Note that $|A| = 2M |H_1(\bar x-\bar y)|$.                       
                In all the situations later, (\ref{eqiota}) has the following consequence  for $X:=X_\iota$ and $Y:= Y_\iota$, 
              
              \begin{equation}\label{majnorm}
              |X-(2M +1)\ Id| + |Y-(2M+1) \ Id | \leq  6 M|D^2g(\bar x-\bar y)|
              \end{equation}

              \bigskip
                             
                   In the rest of the paper we will consider    the operators 
                   $$F(x, u, q, X) = \sum_i |q_i|^{p-2}  X_{ii} -f(x) := F(q, X)-f(x) $$
                  where $q_i =\vec  q\cdot e_i$ and $X_{ij} = ^t e_i Xe_j$, $e_i$ is some  given  orthonormal basis in $\R^N$, $f$ is continuous.    
                   In the sequel we suppose  known that the weak solutions are continuous, see for example   \cite{BC}.  This  permits  to use the  above definition of viscosity solutions, and not its generalization  to bounded functions $u$  which makes use of lower semi-continuous or upper semi -continuous envelope of $u$, see \cite{I1}. 
                   
                   \subsection{Weak solutions are Viscosity solutions} 
            In this section we want to show how   one can deduce the Lipschitz regularity result for weak solutions,  from the regularity result for viscosity solutions, i.e the half part of Theorem  \ref{th1}. 
            
        We begin to recall  the   following  comparison theorem  for weak solutions, 
   \begin{theo}\label{comp}
              
               Suppose that $\Omega $ is a    bounded  ${\cal C}^1$ open subset in $\R^N$, that  $u$  and  $v$ are in $W^{1,p} (\Omega)$ and  satisfy in the distribution sense  
               $ \tilde \Delta_p u \geq  \tilde \Delta_p v$.  
Suppose that $u\leq v$ on $\partial \Omega$, then 
$u\leq v$ in $\Omega$.

\end{theo}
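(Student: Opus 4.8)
The plan is to run the classical monotone-operator comparison argument: test the distributional inequality against the nonnegative function $w:=(u-v)^+$ and use the strict monotonicity of each one-dimensional map $t\mapsto |t|^{p-2}t$. First I would write the hypothesis $\tilde\Delta_p u\ge \tilde\Delta_p v$ explicitly: for every $\varphi\in C_c^\infty(\Omega)$ with $\varphi\ge 0$, integration by parts gives $\langle \tilde\Delta_p u-\tilde\Delta_p v,\varphi\rangle = -\int_\Omega \sum_{i=1}^N\bigl(|\partial_i u|^{p-2}\partial_i u-|\partial_i v|^{p-2}\partial_i v\bigr)\partial_i\varphi\,dx \ge 0$, that is
$$\int_\Omega \sum_{i=1}^N\bigl(|\partial_i u|^{p-2}\partial_i u-|\partial_i v|^{p-2}\partial_i v\bigr)\partial_i\varphi\,dx\ \le\ 0.$$
Since $\partial_i u,\partial_i v\in L^p(\Omega)$, the vector fields $|\partial_i u|^{p-2}\partial_i u$ and $|\partial_i v|^{p-2}\partial_i v$ belong to $L^{p'}(\Omega)$ with $\tfrac1p+\tfrac1{p'}=1$, so the left-hand side is continuous in $\varphi$ for the $W^{1,p}$-norm; by the density of nonnegative $C_c^\infty(\Omega)$ functions in the nonnegative cone of $W^{1,p}_0(\Omega)$ the inequality persists for every $\varphi\in W^{1,p}_0(\Omega)$ with $\varphi\ge 0$.

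Next I would check that $w:=(u-v)^+$ is an admissible test function. It is nonnegative, and since $\Omega$ is a bounded ${\cal C}^1$ domain and $u\le v$ on $\partial\Omega$ in the trace sense, the truncation $(u-v)^+$ has zero trace, hence $w\in W^{1,p}_0(\Omega)$, with $\partial_i w=(\partial_i u-\partial_i v)\mathbf 1_{\{u>v\}}$ a.e. Plugging $\varphi=w$ into the inequality yields
$$\int_{\{u>v\}} \sum_{i=1}^N\bigl(|\partial_i u|^{p-2}\partial_i u-|\partial_i v|^{p-2}\partial_i v\bigr)(\partial_i u-\partial_i v)\,dx\ \le\ 0.$$
Now I would invoke the elementary fact that for $p\ge 2$ the map $t\mapsto |t|^{p-2}t$ is strictly increasing on $\R$, so that $(|a|^{p-2}a-|b|^{p-2}b)(a-b)\ge 0$ for all $a,b\in\R$, with equality if and only if $a=b$ (one even has $(|a|^{p-2}a-|b|^{p-2}b)(a-b)\ge c_p|a-b|^p$). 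Every summand in the last integrand is therefore nonnegative while the integral is $\le 0$; hence each summand vanishes a.e. on $\{u>v\}$, which forces $\partial_i u=\partial_i v$ a.e. on $\{u>v\}$ for every $i$, i.e. $\nabla w=0$ a.e. in $\Omega$. Consequently $w$ is constant on each connected component of $\Omega$, and since $w\in W^{1,p}_0(\Omega)$ this constant is $0$; thus $(u-v)^+\equiv 0$, i.e. $u\le v$ a.e. in $\Omega$, which is the assertion.

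I do not expect a genuine obstacle here, as this is the textbook comparison proof for operators that are subdifferentials of a strictly convex functional; the only point deserving care is the justification that $(u-v)^+\in W^{1,p}_0(\Omega)$ under the trace inequality $u\le v$ on $\partial\Omega$ and that it may legitimately be used as a test function — this is precisely where the ${\cal C}^1$-regularity of $\partial\Omega$ enters, via the standard trace and truncation lemmas for Sobolev functions, which I would simply cite.
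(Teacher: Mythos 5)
Your proposal is correct and follows essentially the same route as the paper: approximate the test function $(u-v)^+\in W_0^{1,p}(\Omega)$ by nonnegative $C_c^\infty$ functions, pass to the limit using $L^{p'}$-integrability of the fluxes, and invoke the componentwise monotonicity of $t\mapsto|t|^{p-2}t$ to force $\nabla(u-v)^+=0$, hence $(u-v)^+\equiv 0$. The only presentational difference is that you phrase the density step as continuity of the pairing in the $W^{1,p}$-norm, whereas the paper passes to the limit in the approximating sequence directly; the content is identical.
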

\begin{proof}   
  Since $(u-v)^+ \in W_0^{1,p} (\Omega)$,  there exists $\varphi_\epsilon\in {\cal D} (\Omega)$, which converges to $(u-v)^+$ in $W^{1,p} (\Omega)$, $\varphi_\epsilon $  can be chosen $\geq 0$.  Multiply the  difference $\tilde \Delta_p u -\tilde  \Delta_p v$ by $\varphi_\epsilon $ and use the definition of the derivative in the distribution sense. One obtains
 $$-\sum_{i=1}^N \int_\Omega  (|\partial_i u|^{p-2} \partial_i u-|\partial_i v|^{p-2} \partial_i v)\partial_i \varphi_\epsilon = \int_\Omega (\tilde \Delta_p u-\tilde \Delta_p v) \varphi_\epsilon \geq 0. $$
  The left hand side tends to 
   $$-\sum_{i=1}^N \int_\Omega  (|\partial_i u|^{p-2} \partial_i u-|\partial_i v|^{p-2} \partial_i v)\partial_i (u-v)^+\leq 0$$ therefore 
   $\int_{u-v \geq 0}\sum_{1\leq i\leq N} (|\partial_i u|^{p-2} \partial_i u-|\partial_i v|^{p-2} \partial_i v)\partial_i (u-v)=0$, 
   hence $\partial_i ((u-v)^+) = 0$ for all $i$, finally $u\leq v$ in $\Omega$. 
   \end{proof}
 
\bigskip

   Let us take a few lines to  motivate the following propositions. 
    We want to have a Lipschitz estimate depending on the $L^\infty$ norm of $u$ and  $f$, when $u$ is a weak solution. 
      A natural idea is to regularize $f$, hence use $f_\epsilon \in {\cal C} (\overline{B_1})$ which tends to $f$ in $L^\infty$ weakly,   define $u_\epsilon$ which is a solution of the Dirichlet problem  associated to (\ref{eq1}) with a  right hand side $f_\epsilon$ and  the   boundary data $u$ on $\partial B_1$.  
      In order to apply the results in Section 3, we  will use the fact that $u_\epsilon$ is also a viscosity solution,  (see \cite{BK} or   Proposition \ref{viscoweak} below). 
      Since the  uniform  Lipschitz estimate  in Section 3 for  $u_\epsilon$,  depends on the $L^\infty$ norms of $u_\epsilon$ and $f_\epsilon$, we need, in order to pass to the limit and obtain some estimate depending on  the $L^\infty$ norms of $u$ and $f$, to "control" the $L^\infty$ norm of $u_\epsilon$ by those of $u$,  this is what we do in Proposition \ref{prop1} now.     
   \begin{prop}\label{prop1}

 There exists some constant $C$ depending only on $(p, N)$,  such that for any  $f\in L^\infty(B_1)$ and $v\in L^\infty(\partial B_1)\cap W^{1-{1\over p},p}  (\partial B_1)$,   if  $u$ is a   weak solution of the Dirichlet problem
$$\left\{ \begin{array}{lc}
\tilde \Delta_p u =(p-1)  f & {\rm in} \  B_1\\
 u = v & {\rm on} \ \partial  B_1
 \end{array}\right.$$
 then 
 \begin{equation}\label{eqinfinie}|u|_{L^\infty (B_1)} \leq C (|f|_{L^\infty(B_1)}^{1\over p-1}+ |v|_{L^\infty(\partial  B_1)}).
 \end{equation}
 \end{prop}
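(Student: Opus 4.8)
The plan is to obtain the $L^\infty$ bound via the maximum principle by comparing $u$ with explicit super- and sub-solutions built from the corresponding comparison theorem (Theorem \ref{comp}). First I would reduce to the case $v\equiv 0$: writing $u = w + \tilde v$ where $\tilde v$ is, say, the harmonic (or any fixed $W^{1,p}$) extension of $v$ is not quite what we want since $\tilde\Delta_p$ is nonlinear, so instead I would directly look for a barrier of the form
$$
\psi(x) = |v|_{L^\infty(\partial B_1)} + \lambda\,(1 - |x|^2),
$$
with $\lambda>0$ a constant to be chosen depending only on $(p,N)$ and on $|f|_{L^\infty(B_1)}$. On $\partial B_1$ we have $\psi = |v|_{L^\infty(\partial B_1)} \ge v = u$, so the boundary inequality needed in Theorem \ref{comp} holds. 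The key computation is then to check that $\psi$ is a (weak, equivalently classical since $\psi$ is smooth) super-solution, i.e.\ $\tilde\Delta_p \psi \le (p-1) f$ in $B_1$.

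For that step I compute $\partial_i \psi = -2\lambda x_i$, so $|\partial_i\psi|^{p-2} = (2\lambda)^{p-2}|x_i|^{p-2}$, and $\partial_{ii}\psi = -2\lambda$; hence
$$
\tilde\Delta_p\psi = (p-1)\sum_{i=1}^N |\partial_i\psi|^{p-2}\partial_{ii}\psi
   = -(p-1)(2\lambda)^{p-1}\sum_{i=1}^N |x_i|^{p-2}.
$$
The difficulty is that $\sum_i |x_i|^{p-2}$ can be small — indeed it vanishes at the origin when $p>2$ — so this naive quadratic barrier does \emph{not} control a nonzero right-hand side near $0$. To fix this I would instead use a barrier that is not radial but "cross-shaped", adapted to the anisotropy of the operator. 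A natural candidate is
$$
\psi(x) = |v|_{L^\infty(\partial B_1)} + \lambda \sum_{i=1}^N (1 - x_i^2),
$$
again smooth, with $\psi \ge |v|_{L^\infty(\partial B_1)} \ge u$ on $\partial B_1$ (since $|x_i|\le 1$ there), and now $\partial_i\psi = -2\lambda x_i$ still, but we may instead take the one-dimensional-type barrier
$$
\psi(x)= |v|_{L^\infty(\partial B_1)} + \lambda\bigl(N - \textstyle\sum_i x_i^2\bigr)
$$
— this has the same defect. The genuinely robust choice, exploiting that at least one coordinate of any $x\in B_1$ is comparable to $|x|/\sqrt N$ is awkward; the cleaner route, which I would actually pursue, is a one-variable barrier: set
$$
\psi(x) = |v|_{L^\infty(\partial B_1)} + \lambda\,(2 - \cosh(x_1) ) \quad\text{or better}\quad
\psi(x) = |v|_{L^\infty(\partial B_1)} + \lambda\bigl(e^{\beta} - e^{\beta x_1}\bigr) + \lambda\bigl(e^\beta-e^{-\beta x_1}\bigr),
$$
depending only on the coordinate $x_1$, chosen so that $|\partial_1\psi|^{p-2}\partial_{11}\psi \le -c(\beta)\lambda^{p-1}<0$ uniformly on $B_1$ while all other $\partial_i\psi=0$; then $\tilde\Delta_p\psi = (p-1)|\partial_1\psi|^{p-2}\partial_{11}\psi \le -(p-1)c\lambda^{p-1}$, and choosing $\lambda = (|f|_{L^\infty(B_1)}/c)^{1/(p-1)}$ makes $\psi$ a supersolution. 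On $\partial B_1$, since $|x_1|\le 1$, one has $\psi\ge |v|_{L^\infty(\partial B_1)}\ge u$. By Theorem \ref{comp}, $u\le \psi\le |v|_{L^\infty(\partial B_1)} + C(p,N)|f|_{L^\infty(B_1)}^{1/(p-1)}$ throughout $B_1$. Applying the same argument to $-u$ (which solves the Dirichlet problem with data $-v$ and right-hand side $-f$) gives the lower bound, and combining the two yields \eqref{eqinfinie}.

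The main obstacle, as the discussion above signals, is precisely the construction of an admissible super-solution: because the pseudo-$p$-Laplacian degenerates wherever a single partial derivative vanishes, a radial barrier is useless, and one must instead use a barrier depending essentially on one coordinate (or a max/sum of such) so that the operator acting on it stays bounded away from zero on all of $\overline{B_1}$. Once a barrier $\psi$ with $\tilde\Delta_p\psi\le -(p-1)c(p,N)\lambda^{p-1}$ and $\psi|_{\partial B_1}\ge 0$ when $v\equiv 0$ is in hand, the rest is a one-line application of Theorem \ref{comp} plus the scaling choice of $\lambda$; I would also double-check the boundary-regularity hypothesis $v\in W^{1-1/p,p}(\partial B_1)$ is only used to guarantee existence/uniqueness of the weak solution $u$ and plays no role in the estimate itself.
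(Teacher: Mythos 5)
Your overall strategy — build an explicit super-solution and invoke the comparison theorem (Theorem \ref{comp}) — is exactly the one the paper uses, so the skeleton is right. But the specific barrier you end up proposing has a genuine defect that breaks the key inequality.

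Both of your one-variable candidates, $\psi = |v|_{L^\infty} + \lambda(2-\cosh x_1)$ and $\psi = |v|_{L^\infty} + \lambda(e^\beta - e^{\beta x_1}) + \lambda(e^\beta - e^{-\beta x_1})$, are even in $x_1$, hence satisfy $\partial_1\psi = 0$ on the whole hyperplane $\{x_1 = 0\}$. There $|\partial_1\psi|^{p-2}\partial_{11}\psi = 0$, not $\le -c\lambda^{p-1}$, so the claim of a uniform negative bound on $B_1$ is false and $\psi$ is not a super-solution of $\tilde\Delta_p\psi \le -(p-1)|f|_\infty$. This is precisely the degeneracy you correctly worried about, but your candidates did not escape it. A one-sided exponential such as $\psi = |v|_{L^\infty} + \lambda(e^\beta - e^{\beta x_1})$ would fix this, since then $\partial_1\psi = -\lambda\beta e^{\beta x_1}$ is bounded away from $0$ on $\overline{B_1}$ and $\partial_{11}\psi < 0$, giving $\tilde\Delta_p\psi = -(p-1)\lambda^{p-1}\beta^p e^{\beta(p-1)x_1} \le -(p-1)\lambda^{p-1}\beta^p e^{-\beta(p-1)}$; with $\psi \ge |v|_{L^\infty}$ on $\partial B_1$ and the right $\lambda$ the comparison argument then closes.

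Separately, your dismissal of radial barriers is too quick: the paper's own proof uses the radial barrier $h(x) = |v|_{L^\infty} + M\bigl(1 - \frac{1}{1+d(x)}\bigr)$ with $d(x)=1-|x|$. The reason it works where your quadratic attempt failed is that the leading negative term of $D^2h$ is proportional to $Dd\otimes Dd$, so the quantity to bound from below is $\sum_i |\partial_i d|^{p-2}(\partial_i d)^2 = \sum_i |\partial_i d|^p$, which for the unit vector $Dd$ is $\ge N^{1-p/2}$ everywhere away from the center (convexity of $t\mapsto t^{p/2}$). With your $\psi = A + \lambda(1-|x|^2)$, the Hessian is a multiple of the identity, so you instead face $\sum_i|x_i|^{p-2}$, which does vanish at $0$; the difference is the exponent $p$ versus $p-2$. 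The paper then pays for the singularity of $d$ at the origin by a short integration-by-parts limiting argument to confirm $h$ is a super-solution in the distribution sense. So the proposition is provable either with a carefully chosen one-variable barrier (as you intended) or with the paper's radial barrier; as written, though, your two explicit barriers fail on $\{x_1=0\}$ and the proof does not go through without repair.
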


\begin{proof} 

Let  $d$ denote the distance to the boundary,  say $d(x) = 1-|x|$,  and  let $h$ be defined by 
$h(x) = |v|_{L^\infty(\partial B_1)} + M  \left(1-{1\over (1+ d(x))}\right) $.

We want to prove that   as soon as $M$ is  large as $|f|_\infty^{1\over p-1}$, and depending on $(N,p)$,  $h$   is a weak super-solution of (\ref{eq1})  in $B_1$, more precisely  $h\in W^{1,p}(B_1
)$ and  $\tilde \Delta_p h\leq -(p-1)|f| _\infty$,  in the distribution  sense.

Note that   $h$ is ${\cal C}^2$ except at zero,  $|D h|\leq M$,    and for $x\neq 0$,  $\tilde \Delta_p h(x) \leq -(p-1) |f|_\infty$.  Suppose for a while that this last assertion  has been proven, and let us derive from it that $h$ is a weak super-solution.  Let  $\varphi \in {\cal D} (B_1)$, 
 $\varphi \geq 0$ :
 \begin{eqnarray*}
\langle \tilde \Delta_p  h, \varphi\rangle &=& -\int_{B_1} \sum_{i=1}^N |\partial_i h|^{p-2} \partial_i h \partial_i \varphi =
 \lim_{\epsilon \rightarrow  0} -\int_{B_1\setminus \overline{B(0, \epsilon)}}  \sum_1^N|\partial_i h|^{p-2} \partial_i h \partial_i \varphi\\
&=& \lim_{\epsilon\rightarrow  0} \int_{B_1\setminus  \overline{B(0, \epsilon)}}  (p-1)  \sum_{i=1}^N |\partial_i h|^{p-2}\partial_{ii} h \varphi - \int_{\partial B(0, \epsilon)}  \sum_{i=1}^N  |\partial_i h|^{p-2} \partial_i h n_i \varphi \\
&\leq &-(p-1)\lim_{\epsilon \rightarrow 0} |f|_\infty \int_{B_1\setminus  \overline{B(0, \epsilon)}} \varphi +M^{p-1}  \lim_{\epsilon\rightarrow 0} \int_{\partial B(0, \epsilon)} \varphi \\
&\leq &-(p-1)|f|_\infty \int_{B_1} \varphi.
\end{eqnarray*}
 There remains to prove that  for $x\neq 0$,  and  for $M$ chosen conveniently, $\tilde \Delta_p h(x) \leq -(p-1) |f|_\infty$. One has for $x\neq 0$
 $$D h=  M{ D d\over (1+ d)^2},\ D^2 h = {-2 M(D d\otimes D d) + (1+ d) D^2d\over (1+ d)^3}$$
 and then  since $D_{ii} d \leq 0$,  by  the concavity of $d$,  
 
 \begin{eqnarray*}
(p-1) \sum_{i=1}^N |\partial_i h |^{p-2} \partial_{ii} h &\leq & -2M^{p-1} (p-1){\sum_{i=1}^N |\partial_i d|^p\over (1+d)^{2p+1} } \\
 &\leq & -2^{-2p} M^{p-1} (p-1)N^{{1-{p\over 2}}}.\\
 \end{eqnarray*}
  Then we choose $M$ so that 
$M^{p-1} 2^{-2p} N^{1-{p\over 2}}> |f|_\infty$ and get $\tilde \Delta_p h(x) \leq -(p-1)|f|_\infty$.  Using Theorem \ref{comp} one gets that 
  $u\leq h$ in $B_1$, which is the desired conclusion.  Replacing $h$ by $-h$,  one sees that $\tilde \Delta _p (-h) \geq (p-1) |f|_\infty$ and then $-h$ is a sub-solution of (\ref{eq1}), hence $u\geq -h$ in $\Omega$. 
  
 \end{proof}

We now   recall the following  local result, \cite{BK} : 
 \begin{prop}\label{viscoweak} Suppose that $u$ is a weak ($W^{1,p}$) solution of $\tilde \Delta_pu= (p-1)  f$  in some open set ${\cal O}$ with 
$ f\in {\cal C} ({\cal O})$,  then $u$ is a ${\cal C}$-viscosity solution of the same equation in ${\cal O}$.
\end{prop}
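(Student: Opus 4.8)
The plan is to deduce the two viscosity inequalities from the comparison principle for weak solutions (Theorem~\ref{comp}), by the standard contradiction argument. Since weak solutions are continuous, the definition of ${\cal C}$-viscosity solution stated above applies to $u$ without modification. I will verify the super-solution inequality in detail; the sub-solution case is obtained by the symmetric argument (test functions touching from above, all inequalities reversed, and the two functions exchanged in the hypotheses of Theorem~\ref{comp}).

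Suppose $u$ is not a viscosity super-solution. Then there are $x_0\in{\cal O}$ and $\varphi$, ${\cal C}^2$ near $x_0$, with $(u-\varphi)(x)\ge (u-\varphi)(x_0)=0$ near $x_0$ and
$$\sum_{i=1}^N|\partial_i\varphi(x_0)|^{p-2}\partial_{ii}\varphi(x_0)-f(x_0)>0.$$
Replacing $\varphi$ by $\varphi-\epsilon|x-x_0|^4$ with $\epsilon>0$ small leaves $D\varphi(x_0)$ and $D^2\varphi(x_0)$ unchanged, so I may assume the contact is strict, i.e. $(u-\varphi)(x)>0$ for $0<|x-x_0|<R_0$. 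Because $p>2$, the function $t\mapsto|t|^{p-2}$ is continuous, hence $x\mapsto\sum_i|\partial_i\varphi(x)|^{p-2}\partial_{ii}\varphi(x)$ is continuous; together with the continuity of $f$, the strict inequality above persists on a ball: there is $r\in(0,R_0)$ with $\overline{B_r(x_0)}\subset{\cal O}$ and $\tilde\Delta_p\varphi(x)>(p-1)f(x)$ for every $x\in B_r(x_0)$. Since $\varphi\in{\cal C}^2$, integrating by parts against nonnegative test functions turns this pointwise inequality into the distributional inequality $\tilde\Delta_p\varphi\ge (p-1)f=\tilde\Delta_p u$ in $B_r(x_0)$.

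Now set $m:=\min_{\partial B_r(x_0)}(u-\varphi)$, which is $>0$ by the strict contact, and $\psi:=\varphi+m$. Then $\psi\in W^{1,p}(B_r(x_0))$, $\psi\le u$ on $\partial B_r(x_0)$, and $\tilde\Delta_p\psi=\tilde\Delta_p\varphi\ge\tilde\Delta_p u$ in the distribution sense in $B_r(x_0)$. Applying Theorem~\ref{comp} on $B_r(x_0)$ (with $\psi$ as the sub-solution and $u$ as the super-solution) gives $\psi\le u$ throughout $B_r(x_0)$; but $\psi(x_0)=\varphi(x_0)+m=u(x_0)+m>u(x_0)$, a contradiction. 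Hence $u$ is a viscosity super-solution, and the symmetric reasoning (using $M':=\max_{\partial B_r(x_0)}(u-\varphi)<0$, $\psi:=\varphi+M'$, and Theorem~\ref{comp} with $u$ as sub-solution and $\psi$ as super-solution) shows that $u$ is a viscosity sub-solution.

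There is no serious obstacle here: the only points needing a little care are the reduction to a strict contact carried out without destroying the strict differential inequality near $x_0$ (the quartic perturbation is chosen so that its gradient and Hessian vanish at $x_0$ while its ${\cal C}^2$ norm on a fixed ball is $O(\epsilon)$), and the passage from the classical inequality for $\tilde\Delta_p\varphi$ to the distributional form required by Theorem~\ref{comp}. Note that $p>2$ is used only through the continuity of $t\mapsto|t|^{p-2}$, so the degeneracy of the operator on unbounded sets plays no role in this step.
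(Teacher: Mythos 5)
Your proof is correct, and it takes a genuinely different route from the paper. The paper's proof does \emph{not} invoke the comparison Theorem~\ref{comp}: instead, after the same reduction to a strict contact and a strict differential inequality $\sum_i|\partial_i\phi|^{p-2}\partial_{ii}\phi\ge f+\epsilon/(p-1)$ on a small ball $B_r(x_0)$, it sets $\phi=\varphi+m/2$ (so that the support of $(\phi-u)^+$ is compactly contained in $B_r$), multiplies the classical inequality by the test function $(\phi-u)^+$, multiplies the weak equation for $u$ by the same function, subtracts the two identities, and then uses the pointwise monotonicity of $s\mapsto|s|^{p-2}s$ to make the left side $\le 0$ while the $\epsilon$-margin forces the right side to be $>0$, a contradiction. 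You instead observe that, for $p>2$, the map $s\mapsto|s|^{p-2}s$ is $\mathcal C^1$, so the classical inequality for $\varphi$ upgrades to a distributional inequality $\tilde\Delta_p\psi\ge\tilde\Delta_p u$ on $B_r(x_0)$ for the lifted test function $\psi=\varphi+m$; you then apply Theorem~\ref{comp} on $B_r(x_0)$ as a black box and read off the contradiction at $x_0$. Your route is shorter and modular since it reuses Theorem~\ref{comp}, whereas the paper's argument is more self-contained (it re-derives the relevant integral inequality directly, and does not pass through an intermediate distributional statement about $\varphi$); note also that the paper's mechanism of contradiction is the interior $\epsilon$, while yours is the strict drop of $\psi$ at $x_0$ against the comparison inequality. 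Both are valid, and your explicit remark that $p>2$ enters only through the $\mathcal C^1$-smoothness of $s\mapsto|s|^{p-2}s$ is a useful clarification. One small point worth keeping in mind: with your choice $\psi=\varphi+m$ (rather than the paper's $\varphi+m/2$), the inequality $\psi\le u$ on $\partial B_r(x_0)$ is non-strict, so the boundary hypothesis of Theorem~\ref{comp} is only met in the trace sense $(\psi-u)^+\in W_0^{1,p}(B_r(x_0))$; this is fine because $\psi-u$ is continuous on $\overline{B_r(x_0)}$ and nonpositive on the boundary, but it is worth stating explicitly since the proof of Theorem~\ref{comp} starts exactly from that membership.
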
 

\begin{proof} 
The proof is made in   \cite{BK}, but it is written here for the reader's convenience.  

 We do the super-solution case.    Take any $x_o\in {\cal O}$ and some ${\cal C}^2$ function $\varphi$ such that 
$(u-\varphi)(x) \geq (u-\varphi)(x_o) = 0$ in a neighbourhood of $x_o$.
We can assume  the inequality to be  strict  for $x\neq x_o$, by replacing $\varphi$ by $x\mapsto \varphi(x) -|x-x_o|^4$.      
  Assume by contradiction that   for some $\epsilon >0$ 
$$(p-1)\sum_{i=1}^N|\partial_i \varphi (x_o)|^{p-2}{\partial_{ii} \varphi(x_o)} \geq (p-1) f(x_o)+\epsilon,$$
then by continuity this is also true (up to changing $\epsilon$ )  in a neighbourhood $B_r(x_o)$, with $\overline{B_r(x_o)} \subset {\cal O}$. 
Let $m = \inf _{\partial B_r(x_o)} (u-\varphi)$ and $\phi = \varphi + {m\over 2}$ which satisfies 
$u-\phi >0$ on $\partial B_r(x_o)$ and $\phi (x_o) > u(x_o)$. We also have
 
 $(p-1)\sum_{i=1}^N |\partial_i \phi (x)|^{p-2}{\partial_{ii} \phi(x)} \geq(p-1)  f(x)+\epsilon $ in $B_r(x_o)$.
Multiplying by $(\phi-u)^+$,  and integrating over $B_r$, we obtain 
$$-\int_{B_r(x_o)\cap \{\phi-u>0\}} \sum_{i=1}^N|\partial_i \phi|^{p-2}\partial_i \phi\partial_i (\phi-u)\geq  (p-1)\int_{B_r(x_o)}  (f+\epsilon) (\phi-u)^+.$$
On the other hand, since $u$ is a weak solution, 
$$\int_{B_r(x_o)\cap \{\phi-u>0\}} \sum_{i=1}^N |\partial_i u|^{p-2} \partial_i u \partial_i (\phi-u) = (p-1) \ \int_{B_r(x_o)}   f(\phi-u)^+.$$
Adding  the two equations, one obtains 
 $$-\int_{B_r(x_o)} \sum_{i=1}^N\left( |\partial_i \phi|^{p-2}\partial_i \phi-|\partial_i u|^{p-2}{\partial_i u} \right) \partial_i (\phi-u)^+\geq  (p-1) \epsilon \int_{B_r(x_o)}  (\phi-u)^+. $$
This is a contradiction, because the right hand side is positive, while the left hand side is non positive.
This proves that $u$ is a viscosity super-solution. We could do the same with obvious changes to prove that  a weak solution 
 is  a viscosity sub-solution. 
\end{proof}

We derive from this the  result in Theorem \ref{th1}  for weak solutions, once we know it for viscosity solutions : 
               
Suppose that  $f\in L^\infty (B_1)$ and  that $u$ is a weak solution of $
\tilde \Delta_p u =(p-1) f $   in $ B_1$, $u\in L^\infty (B_1)$. 
 Let  $f_\epsilon \in {\cal C} (\overline{B_1})$, $f_\epsilon \rightharpoonup f$ in $L^\infty (B_1)$ weakly,  $|f_\epsilon|_\infty \rightarrow |f|_\infty$, and  let $u_\epsilon$ be the unique weak   solution of 
 $$\left\{ \begin{array}{lc}
 \tilde \Delta_p u_\epsilon  =(p-1) f_\epsilon & {\rm in} \ B_{1}\\
 u_\epsilon = u & {\rm on} \ \partial B_{1}. 
 \end{array}\right.$$
  which makes sense since $u_{\mid {\partial B_1}} \in W^{1-{1\over p},p} (\partial B_1)$.  It is equivalent to say that $u_\epsilon$    satisfies 
  $ J_\epsilon (u_\epsilon) = \inf_{v-u\in W_o^{1,p} (B_1)} J_\epsilon (v)$
 where  $J_\epsilon (v) = {1\over p} \int_{B_1}\sum_{1}^N  |\partial_i v|^p+(p-1)\int_{B_1} f_\epsilon v$,  $J (v) = {1\over p} \int_{B_1} \sum_1^N |\partial_i v|^p+(p-1)\int_{B_1} f v$. 
    It is clear that  $(u_\epsilon)_\epsilon$ is bounded in $W^{1,p}(B_1)$, and that 
   ${\rm limsup}_{\epsilon \rightarrow 0} \inf_{\{v-u\in W_o^{1,p}\}} J_\epsilon(v)
    \leq \inf_{\{v-u\in W_o^{1,p}\}} J(v)$.      Hence  one can extract from it  a subsequence 
  which converges   weakly  in $W^{1,p} (B_1)$  to  some $\bar u$.  Note that by Poincar\'e's inequality,  since $u_\epsilon-\bar u \in W_o^{1,p}$,  $u_\epsilon -\bar u$ tends to zero in $L^p$ strongly. Hence $\bar u$ satisfies   
  $ J(\bar u ) \leq \liminf J_\epsilon (u_\epsilon)$. 
   Finally $\bar u$  is a 
   minimizer for  $J$, with $\bar u = u$ on $\partial B_1$,  hence    by uniqueness  $\bar u =u$  and the convergence of $u_\epsilon$ to $u$ is strong in $W^{1,p}$. By   the results in Section 3,  for a  viscosity solution and  a right hand side continuous and bounded,    for all $r$ 
  there exists some   constant $C(N,p,r)$  such that  for any $\epsilon$,  since $u_\epsilon$ is a ${\cal C}$-viscosity solution in $B_1$,
 $${\rm Lip}_{B_r}  (u_\epsilon) \leq C(N,p,r)(|u_\epsilon|_{L^\infty(B_1)}+ |f_\epsilon|_{L^\infty(B_1 )}^{1\over p-1}).$$
 Using (\ref{eqinfinie}) for $u_\epsilon$ and passing to the limit,  one gets the estimate
    ${\rm Lip}_{B_r}(u) \leq C(N,p,r)(|u|_{L^\infty(B_1)} + |f|_{L^\infty(B_1)} ^{1\over p-1})$.

\begin{rema}
 We  also have the alternative result :

 There exists some constant $C(N, p, r)$ such that for any $u$ a weak solution of (\ref{eq1}) in $B_1$, 
 ${\rm Lip}_{B_r} (u) \leq  C(N,p,r)(|u|_{W^{1,p}(B_1)} + |f|_{L^\infty(B_1)} ^{1\over p-1})$.
 
 This result can  directly  be deduced from the previous one  by using the $L^\infty_{loc}$ estimate 
 $$ |u|_{L^\infty(B_{1+r\over 2})} \leq C (|u|_{W^{1,p} (B_1)} + |f|_\infty^{1\over p-1})$$
  which can be derived from the results in \cite{G}, \cite{BC}.
  \end{rema}

\section{ Proof of Theorem \ref{th1bis}}

 From now we assume that $f$ and $g$ are continuous in $B_1$ and that 
 $u$ satisfies in the viscosity sense 
     $$ \sum_i |\partial_i u|^{p-2} \partial_{ii} u \geq  f $$
                         and $v$ satisfies in the viscosity sense 
                         $$ \sum_i |\partial_i v|^{p-2} \partial_{ii} v  \leq  g .  $$

The notation $c$,  and   $c_i$ will always denote some positive constants which depend only on $r, p, N$,  the H\"older's exponent $\gamma$  when it intervenes and of universal constants.  Proving in a first time  some  H\"older's estimate   for any exponent $\gamma$ is necessary  to get the Lipschitz  estimate. 
\subsection{Material for the proofs}

In all the section, $\omega$ denotes some   continuous  function on $\R^+$,  such that $\omega(0)=0$, $\omega$ is ${\cal C}^2$ on $\R^{+\star}$    and  $\omega (s) >0$,   $\omega^\prime (s)>0$ and $\omega^{\prime \prime} (s) <0$ on $]0,1[$. Let  $g$ be  the radial function 
 $$g(x) = \omega (|x|).$$
 
  Then  for $|x|<1$,
 $Dg (x)= \omega^\prime (|x|) {x\over |x|},  $
  and 
  $$D^2 g (x) = \left(\omega^{\prime \prime } ( |x|) -{\omega^\prime ( |x|)\over  |x|}\right) {x\otimes x\over  |x|^2} +  {\omega^\prime ( |x|)\over  |x|} Id. $$
  We denote  by $ H_1(x)$ the   symmetric matrix with  entries  $\partial_{ij } g(x)$, and for $\iota \leq    {1\over 4 | H_1(x)| } $, we define 
  $ \tilde H =  H_1+2 \iota  H_1^2$. With that choice of $\iota$ there exist constants $\alpha_H\in ]{1\over 2} , {3\over 2}], \  \beta_H \geq {1\over 2}$ such that 
   \begin{equation} \label{alphabetaH}
   \tilde H (x)=  \left(\beta_H\omega^{\prime \prime } ( |x|) -\alpha_H{\omega^\prime ( |x|)\over  |x|}\right) {x\otimes x\over  |x|^2} + \alpha_H {\omega^\prime ( |x|)\over  |x|} Id. 
   \end{equation} 
When $p>4$, and for any  $|x|<1$, $x\neq 0$,   we will use a number   $\epsilon >0$,  
 ( that we will  make precise  depending   on the  H\"older and Lipschitz cases  )  for which  we define    $I(x, \epsilon) = \{ i\in [1,N] , |x_i | \geq  |x|^{1+\epsilon} \} $. When no ambiguity arises, we will denote  it  
  $I$ for simplicity. We define    the vector 
   \begin{equation}\label{w}  w  =\left\{ \begin{array}{lc}
    \sum_1^N|x_i|^{2-p\over 2} x_i e_i &\ {\rm  if } \ p\leq 4\\
   \sum_{i\in I} |x_i|^{2-p\over 2} x_i e_i &\ {\rm if} \  p>4, \ {\rm when} \  I(x, \epsilon) \neq \emptyset .
   \end{array}\right.
   \end{equation}
    Note that if $p\leq 4 $,  
\begin{equation}\label{w1}
| w|^2 \leq  |x|^{4-p} N^{p-2\over 2}, 
\end{equation} 
 while  if $p>4$
\begin{equation}\label{w2} | w|^2 \leq  \#I(x, \epsilon) |x|^{(4-p)(1+\epsilon)}.
\end{equation}  
 Furthermore,   $$|x|^2-\sum_{i\in I} |x_i|^2= \sum _1^N |x_i|^2 -\sum_{i\in I} |x_i|^{2} \leq N |x|^{2+2\epsilon} .$$
 We  also define the diagonal matrix $\Theta(x)$ with  entries 
  $\Theta_{ij}(x) =\left\vert { \omega^\prime (|x|) x_i\over |x|}\right\vert^{p-2\over 2}\delta_i^j$, where $\delta_i^j$ denotes the Kronecker symbol, 
   and the matrix $$H(x)  = \Theta (x) \tilde H (x)\Theta (x).$$
   
   \begin{prop}\label{prop4}

  \noindent  1) If $p\leq 4$,  for all $x\neq 0$,   $|x|<1$,   $H(x)$
 has at least one eigenvalue less than 
 \begin{equation}\label{pleq4} N^{1-{p\over 2}}\  \beta_H\omega^{\prime\prime} (|x|)(\omega^\prime (|x|) )^{p-2}.
 \end{equation}
 2)  If $p>4$, for all $x\neq 0$, $|x|<1$,   for any   $\epsilon >0$    such that $I(x, \epsilon) \neq \emptyset$,  and  such that 
 \begin{equation}\label{eqNepsilon}
\beta_H\omega^{\prime\prime} (|x|) (1-N |x|^{2\epsilon} )+\alpha_H N  |x|^{2\epsilon}{ \omega^{\prime }(|x|)\over |x|} \leq  {\omega^{\prime\prime} (|x|)\over 4}  <0 , 
  \end{equation} 
 then 
  $H(x)$ possesses at least one eigenvalue less than 
   \begin{eqnarray}\label{p>4}
 {1-N |x|^{2\epsilon}\over \# I(x, \epsilon)} (\omega^\prime (|x|))^{p-2}|x|^{(p-4)\epsilon}  {\omega^{\prime\prime} (|x|)\over 4}
   \end{eqnarray}
   
    \end{prop}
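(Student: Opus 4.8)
The plan is to estimate the smallest eigenvalue of $H(x)=\Theta(x)\tilde H(x)\Theta(x)$ from above by a Rayleigh quotient. Since $\Theta(x)$ is symmetric, ${}^t\xi\, H(x)\,\xi={}^t(\Theta(x)\xi)\,\tilde H(x)\,(\Theta(x)\xi)$ for every $\xi\in\R^N$, so the least eigenvalue of $H(x)$ is at most ${}^t(\Theta(x)\xi)\,\tilde H(x)\,(\Theta(x)\xi)/|\xi|^2$ for any nonzero $\xi$. The idea is then to choose $\xi$ so that $\Theta(x)\xi$ points in (essentially) the radial direction: formula (\ref{alphabetaH}) gives $\tilde H(x)\,x=\beta_H\,\omega''(|x|)\,x$, so $x$ is the eigenvector of $\tilde H(x)$ for the negative eigenvalue $\beta_H\,\omega''(|x|)$, every direction orthogonal to $x$ carrying the positive eigenvalue $\alpha_H\,\omega'(|x|)/|x|$. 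Writing $\theta_i=\left|\omega'(|x|)\,x_i/|x|\right|^{(p-2)/2}$ for the diagonal entries of $\Theta(x)$, one has $\theta_i=0$ exactly when $x_i=0$, so $\Theta(x)$ need not be invertible; this is harmless because the vanishing coordinates contribute nothing to $\Theta(x)\xi$ and one simply takes $\xi_i=0$ there.

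For $p\le 4$ I would take $\xi=\sum_{x_i\neq 0}(x_i/\theta_i)\,e_i$, which is nonzero when $x\neq 0$ and satisfies $\Theta(x)\xi=x$ coordinate by coordinate; hence ${}^t\xi\, H(x)\,\xi={}^tx\,\tilde H(x)\,x=\beta_H\,\omega''(|x|)\,|x|^2$, while a direct computation gives $|\xi|^2=\sum_{x_i\neq 0}x_i^2/\theta_i^2=|x|^{p-2}(\omega'(|x|))^{-(p-2)}\,|w|^2$ with $w$ as in (\ref{w}). Dividing and using (\ref{w1}), $|w|^2\le |x|^{4-p}N^{(p-2)/2}$, together with $\beta_H\,\omega''(|x|)<0$ (so that the upper bound on $|w|^2$ translates into an upper bound for the Rayleigh quotient), gives at once the eigenvalue bound (\ref{pleq4}).

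For $p>4$ the same construction must be localised to the large coordinates $I=I(x,\epsilon)$, because the sum $\sum_i|x_i|^{4-p}$ is controlled (by $\#I\,|x|^{(1+\epsilon)(4-p)}$) only when restricted to $I$. Taking $\xi=\sum_{i\in I}(x_i/\theta_i)\,e_i$ and $x_I=\sum_{i\in I}x_i\,e_i$, one has $\Theta(x)\xi=x_I$ and, using $x\cdot x_I=|x_I|^2$ in (\ref{alphabetaH}), ${}^t\xi\, H(x)\,\xi={}^tx_I\,\tilde H(x)\,x_I=|x_I|^2\bigl[\beta_H\,\omega''(|x|)\,|x_I|^2/|x|^2+\alpha_H\,(\omega'(|x|)/|x|)\,(1-|x_I|^2/|x|^2)\bigr]$. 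The bound $|x|^2-|x_I|^2=\sum_{i\notin I}|x_i|^2\le N|x|^{2+2\epsilon}$ gives $1-|x_I|^2/|x|^2\le N|x|^{2\epsilon}$ and $|x_I|^2/|x|^2\ge 1-N|x|^{2\epsilon}$, and hypothesis (\ref{eqNepsilon}) is exactly what lets the bracket be estimated by $\omega''(|x|)/4<0$ (note that (\ref{eqNepsilon}) also forces $1-N|x|^{2\epsilon}>0$). Combining this with $|x_I|^2\ge |x|^2(1-N|x|^{2\epsilon})$ and with $|\xi|^2=|x|^{p-2}(\omega'(|x|))^{-(p-2)}\,|w|^2\le \#I\,(\omega'(|x|))^{-(p-2)}\,|x|^{2-(p-4)\epsilon}$, the last inequality coming from (\ref{w2}), shows the Rayleigh quotient to be at most the quantity in (\ref{p>4}).

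The algebra involved is routine; the step I expect to be delicate, and the main source of possible slips, is the bookkeeping of signs, since $\omega''<0$, $\beta_H\,\omega''<0$, $\alpha_H\,\omega'/|x|>0$, the exponent $4-p$ changes sign at $p=4$, and the target bounds (\ref{pleq4}) and (\ref{p>4}) are themselves negative: each inequality — replacing $|x_I|^2$ by $|x|^2(1-N|x|^{2\epsilon})$, using an upper bound for $|\xi|^2$ in a denominator multiplying a negative numerator, etc. — must be applied in the correct direction. A secondary point to spell out carefully is why $\Theta(x)\xi$ equals $x$ (resp. $x_I$) despite $\Theta(x)$ being possibly singular.
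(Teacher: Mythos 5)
Your proposal is correct and takes essentially the same approach as the paper: your test vector $\xi$ is exactly the paper's vector $w$ from (\ref{w}) rescaled by the constant $(|x|/\omega'(|x|))^{(p-2)/2}$, so that $\Theta(x)\xi=x$ (resp. $x_I$), and the Rayleigh-quotient bound ${}^t\xi H(x)\xi/|\xi|^2={}^t w H(x) w/|w|^2$ is the very quantity the paper estimates. Passing through ${}^tx\,\tilde H(x)\,x$ rather than computing ${}^tw\,H(x)\,w$ from the explicit entries $H_{ij}$ is a cosmetic simplification; the sign bookkeeping you flagged as delicate is carried out correctly in your sketch (both the replacement $|x_I|^2\ge|x|^2(1-N|x|^{2\epsilon})$ against the negative factor $\omega''/4$, and the use of an upper bound for $|\xi|^2$ in the denominator), so no gap.
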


     \begin{proof} 
     
      Using the definitions  of $ H_1$  and $H$ one has 
      \begin{eqnarray*}
      H_{ij}(x) &=&  \left({\omega^\prime (|x|)\over |x|} \right)^{p-2 }\left(\beta_H\omega^{\prime \prime } ( |x|) -\alpha_H{\omega^\prime ( |x|)\over  |x|}\right) |x_i|^{p-2\over 2} |x_j|^{p-2\over 2}  {x_ix_j\over  |x|^2}\\
      &+& \alpha_H \left({\omega^\prime (|x|)\over |x|} \right)^{p-1} |x_i|^{p-2} \delta_i^j.
      \end{eqnarray*}

           Let $ w$ be defined above,  
           
           - For $p\leq 4$, using the definition of $ w$  in (\ref{w}) 
           $$^t  w H(x)   w =\beta_H\left({\omega^\prime(|x|)\over |x|}\right)^{p-2}   \omega^{\prime \prime} (|x|)|x|^2, 
          $$
           and then  using  estimate (\ref{w1})
           $${^t w  H (x) w \over | w|_2^2} \leq \beta_HN^{2-p\over 2} \left( \omega^\prime(| x|)\right)^{p-2} \omega^{\prime \prime} (|x|)\leq {1\over 2}   N^{2-p\over 2} \left( \omega^\prime(| x|)\right)^{p-2} \omega^{\prime \prime} (|x|), $$
     -while if $p>4$, 
            \begin{eqnarray*}
            ^t  w H (x) w &=&\left({\omega^\prime(|x|)\over |x|}\right)^{p-2}  \beta_H\omega^{\prime \prime} (|x|) {(\sum_{i\in I} |x_i|^2)^2\over  |x|^2} \\
            &+& \alpha_H\left({\omega^\prime(|x|)\over |x|}\right)^{p-1}(-{(\sum_{i\in I} |x_i|^2)^2
\over |x|^2}+\sum_{i\in I} |x_i|^2)  , 
\end{eqnarray*}

and then  if $p> 4$, and $I( x, \epsilon ) \neq \emptyset$, using  (\ref{eqNepsilon}), (\ref{alphabetaH})  and  (\ref{w2})   
  \begin{eqnarray*}
&& {^t  w H (x) w\over  |w|_2^2} \\
&\leq&  \left({\omega^\prime(|x|)\over |x|}\right)^{p-2} { \sum_{i\in I}  |x_i|^2 \over \#I(x, \epsilon) |x|^{(4-p)(1+\epsilon)} }( \beta_H\omega^{\prime \prime} (|x|)(1-N |x|^{2\epsilon})+  \alpha_HN  \omega^\prime (|x| ) |x|^{-1+2\epsilon})\\
 &\leq &{(1- N|x|^{2\epsilon}) \over \#I(x, \epsilon)}   \left(\omega^\prime(|x|)\right)^{p-2} |x|^{(p-4)\epsilon}  ({1\over 2}  \omega^{\prime \prime} (|x|)(1-N |x|^{2\epsilon})+  N {3\over 2}  \omega^\prime (|x| ) |x|^{-1+2\epsilon})\\
 &\leq &  {1-N |x|^{2\epsilon}\over \# I(x, \epsilon)} (\omega^\prime (|x|))^{p-2}|x|^{(p-4)\epsilon}  {\omega^{\prime\prime} (|x|)\over 4}  \end{eqnarray*}
      \end{proof}

        \bigskip
         We derive now from these last observations and from Proposition \ref{prop4} the following 
                 
                  \begin{prop}\label{prop5}
                  Suppose that $\omega$, $g$,  $\Theta(x)$,  and $ H_1(x)$ are as   in Proposition \ref{prop4} and that   for some $M>1$, for  $\iota = {1\over 4M |H_1(x)|}$ and $\tilde H(x) = H_1(x)+ 2 \iota H_1^2(x)$, $(X, Y)$ satisfy 
                   \begin{eqnarray} \label{eqXL} -6 M| H_1(x)| \left(\begin{array}{cc}
           I &0\\
           0&I\end{array}\right) &\leq& \left(\begin{array}{cc}
           X-(2M+1)Id&0\\
           0& Y-(2M+1)  Id
           \end{array}\right)\nonumber \\
           &\leq& M \left(\begin{array}{cc}
           \tilde H(x)&- \tilde H(x)\\
           - \tilde H(x)&  \tilde H(x)\end{array}\right)
           \end{eqnarray} 
            1) Then 
            $$\Theta(x)(X+Y-2(2M+1)  Id ) \Theta(x) \leq 0, $$
             consequently 
             $M^{p-2}\Theta(x) (X+Y) \Theta(x) \leq 2(2M+1)M^{p-2}   |\Theta(x) |^2 Id$,  and then 
             for any $i\in [1,N]$ 
             \begin{equation}\label{autresvp1}
               \lambda_i (M^{p-2} \Theta(x) (X+Y) \Theta(x) ) \leq  2(2M+1)M^{p-2}   |\Theta(x)|^2\leq 6M ^{p-1} |\Theta(x)|^2.
               \end{equation}  
               2) The smallest eigenvalue satisfies 
               \begin{equation} \label{mu1pleq4}
               {\rm If } \ p\leq 4,\ 
  \lambda_1 (M^{p-2} \Theta(x) (X+ Y-2(2M+1) Id ) \Theta(x) )\leq 2M^{p-1} {N^{2-p\over 2}}  \left( \omega^\prime(| x|)\right)^{p-2} \omega^{\prime \prime} (|x|).
  \end{equation}
   While if $p\geq 4$
      \begin{eqnarray} \label{mu1pgeq4}
 && \lambda_1 (M^{p-2}\Theta(x) (X+ Y-2(2M+1)  Id) \Theta(x) )\\
  &\leq & M^{p-1} {(1- N|x|^{2\epsilon}) \over \#I(x, \epsilon)}  \left(\omega^\prime(|x|)\right)^{p-2} |x|^{(p-4)\epsilon}  {\omega^{\prime \prime} (|x|) }  \nonumber \end{eqnarray}  
\end{prop}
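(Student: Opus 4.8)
The plan is to derive everything from the matrix inequality~(\ref{eqXL}) together with the spectral information about $H(x)$ collected in Proposition~\ref{prop4}. The key observation is that testing~(\ref{eqXL}) against vectors of the special form $\left(\begin{array}{c} z\\ z\end{array}\right)$ kills the right-hand side, since $\,^t(^tz,^tz)\left(\begin{array}{cc}\tilde H & -\tilde H\\ -\tilde H & \tilde H\end{array}\right)\left(\begin{array}{c}z\\ z\end{array}\right)=0$, and leaves $\,^tz(X-(2M+1)Id)z + \,^tz(Y-(2M+1)Id)z\le 0$ for every $z\in\R^N$. This is exactly the statement $X+Y-2(2M+1)Id\le 0$, and conjugating by the (symmetric) matrix $\Theta(x)$ preserves the sign, giving part~1's first inequality $\Theta(x)(X+Y-2(2M+1)Id)\Theta(x)\le 0$. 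Multiplying by $M^{p-2}>0$ and rearranging yields $M^{p-2}\Theta(x)(X+Y)\Theta(x)\le 2(2M+1)M^{p-2}|\Theta(x)|^2 Id$, whence every eigenvalue is bounded by $2(2M+1)M^{p-2}|\Theta(x)|^2$; the crude bound $2(2M+1)\le 6M$ (valid since $M>1$) gives the stated $6M^{p-1}|\Theta(x)|^2$ in~(\ref{autresvp1}).

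For part~2, the strategy is to exhibit one test vector along which the quadratic form $M^{p-2}\,^tz\,\Theta(x)(X+Y-2(2M+1)Id)\Theta(x)\,z$ is very negative; then the variational (min) characterisation of the smallest eigenvalue $\lambda_1$ does the rest. Here I would take $z$ so that $\Theta(x)z$ recovers the vector $w$ from~(\ref{w}): concretely, since $\Theta(x)$ is diagonal and invertible away from the axes where its entries vanish, one uses $z = \Theta(x)^{-1}w$ on the relevant coordinates (this is precisely why the proof of Proposition~\ref{prop4} was written in terms of $^tw H(x)w = {}^tw\,\Theta(x)\tilde H(x)\Theta(x)\,w$). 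Then $M^{p-2}\,^t(\Theta^{-1}w)\,\Theta(X+Y-2(2M+1)Id)\Theta\,(\Theta^{-1}w) = M^{p-2}\,^tw(X+Y-2(2M+1)Id)w$, and applying~(\ref{eqXL}) to $\left(\begin{array}{c}\Theta^{-1}w\\ \Theta^{-1}w\end{array}\right)$—wait, more carefully: one applies it to the $2N$-vector built to extract exactly $M\,^tw\tilde H w$ on the right, i.e.\ the direction that realizes the Rayleigh quotient computed in Proposition~\ref{prop4}. That computation gave $^tw H(x)w/|w|^2 \le \beta_H N^{(2-p)/2}(\omega'(|x|))^{p-2}\omega''(|x|)$ when $p\le 4$ and the corresponding bound~(\ref{p>4}) when $p>4$; multiplying through by $M^{p-1}$ (one factor $M^{p-2}$ explicit, one factor $M$ from the scaling of $A=D^2\Phi$) and using $\beta_H\ge\frac12$ produces the factor $2M^{p-1}N^{(2-p)/2}$ in~(\ref{mu1pleq4}) and $M^{p-1}\frac{1-N|x|^{2\epsilon}}{\#I(x,\epsilon)}(\omega'(|x|))^{p-2}|x|^{(p-4)\epsilon}\omega''(|x|)$ in~(\ref{mu1pgeq4}). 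Note the final bound in~(\ref{mu1pgeq4}) replaces the $\omega''/4$ of~(\ref{p>4}) by $\omega''$; since $\omega''<0$ this is a genuine (weaker but cleaner) upper bound, so one simply discards the constant $\tfrac14$, or rather keeps whichever the subsequent sections need—here the statement is written with $\omega''$, consistent with using $\beta_H\le\frac32$, $\alpha_H\le\frac32$ loosely.

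The only subtle point—and the main thing to get right—is the bookkeeping that turns the Rayleigh quotient bound for the fixed matrix $H(x)=\Theta\tilde H\Theta$ from Proposition~\ref{prop4} into an eigenvalue bound for the matrix $M^{p-2}\Theta(X+Y-2(2M+1)Id)\Theta$, which is \emph{not} $M H(x)$ but only dominated by it in the sense of~(\ref{eqXL}). The mechanism is: (i) by~(\ref{eqXL}), $\,^t w(X+Y-2(2M+1)Id)w \le M\,^t w\,2\tilde H(x)w$? — no: testing against $\left(\begin{array}{c}w\\ -w\end{array}\right)$ gives the right side $M\cdot 4\,^tw\tilde H w$ and the left side $^tw(X-(2M+1)Id)w + {}^tw(Y-(2M+1)Id)w$ (the cross terms vanish because the big matrix on the left of~(\ref{eqXL}) is block diagonal), i.e.\ $^tw(X+Y-2(2M+1)Id)w\le 4M\,^tw\tilde Hw$, hence after conjugating and dividing by $|\Theta^{-1}w|^2$... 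The cleanest route avoids $\Theta^{-1}$ entirely: test~(\ref{eqXL}) against $\left(\begin{array}{c}\Theta w'\\ -\Theta w'\end{array}\right)$ with $w'$ chosen so $\Theta w' = w$, getting $^tw(X+Y-2(2M+1)Id)w \le 4M\,^t w'\,\Theta\tilde H\Theta\, w' \cdot(\text{scalar})$; matching this against the Rayleigh quotient $\lambda_1\big(M^{p-2}\Theta(X+Y-2(2M+1)Id)\Theta\big)\le M^{p-2}\,^t\xi\,\Theta(X+Y-2(2M+1)Id)\Theta\,\xi / |\xi|^2$ with $\xi=\Theta^{-1}w$ gives $\lambda_1 \le M^{p-1}\,{}^t w\,\tilde H\, w\,\|\text{stuff}\|/|w|^2 = M^{p-1}\,{}^tw H(x)w/|w|^2\cdot 4$ — and I expect the constant to collapse correctly once one is careful that the vector realizing Proposition~\ref{prop4}'s bound is $w$ itself (so no extra $\Theta$ appears, because $H(x)=\Theta\tilde H\Theta$ already absorbed it). I would therefore carry out the argument by: first proving the sign inequality of part~1 and deducing~(\ref{autresvp1}); then, for part~2, writing $\lambda_1(M^{p-2}\Theta(X+Y-2(2M+1)Id)\Theta) \le M^{p-2}|w|^{-2}\,{}^t(\Theta^{-1}w)\Theta(X+Y-2(2M+1)Id)\Theta(\Theta^{-1}w) = M^{p-2}|w|^{-2}\,{}^tw(X+Y-2(2M+1)Id)w$, bounding the last expression via~(\ref{eqXL}) tested at $\left(\begin{array}{c}\Theta^{-1}w\\ -\Theta^{-1}w\end{array}\right)$ by $M^{p-2}|w|^{-2}\cdot 4M\,{}^tw\tilde H w = 4M^{p-1}\,{}^tw H(x)w/|w|^2$, and finally invoking Proposition~\ref{prop4} (absorbing the harmless constant into the $\le$ with $\omega''<0$) to reach~(\ref{mu1pleq4}) and~(\ref{mu1pgeq4}).
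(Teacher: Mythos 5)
Part~1 of your proposal is correct and essentially identical to the paper's argument: test~(\ref{eqXL}) against $\left(\begin{array}{c}z\\ z\end{array}\right)$, note the right-hand side vanishes, deduce $X+Y-2(2M+1)Id\le 0$, and conjugate by $\Theta(x)$.

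For part~2, however, there is a genuine gap in the algebra of your ``cleanest route.'' You aim to bound $\lambda_1\bigl(M^{p-2}\Theta(X+Y-2(2M+1)Id)\Theta\bigr)$ by a Rayleigh quotient, but you take the test vector to be $\Theta^{-1}w$ and then divide by $|w|^2$; the Rayleigh quotient with test vector $\Theta^{-1}w$ must be normalised by $|\Theta^{-1}w|^2$, not $|w|^2$. More seriously, testing~(\ref{eqXL}) at $\left(\begin{array}{c}\Theta^{-1}w\\ -\Theta^{-1}w\end{array}\right)$ produces the quadratic form $4M\,{}^t(\Theta^{-1}w)\tilde H(\Theta^{-1}w)$, which you then write as $4M\,{}^tw\tilde Hw$ and declare equal to $4M\,{}^twH(x)w$. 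Neither step is correct: ${}^tw\tilde Hw\neq {}^twHw={}^tw\Theta\tilde H\Theta w$ because $\Theta$ does not commute with $\tilde H$, and the intermediate $\Theta^{-1}$'s do not cancel as written. (There is also an invertibility issue: when some $x_i=0$, the corresponding diagonal entry of $\Theta$ vanishes, so $\Theta^{-1}w$ is not defined.) The correct bookkeeping is the paper's: first conjugate the whole of~(\ref{eqXL}) by $\left(\begin{array}{cc}\Theta&0\\0&\Theta\end{array}\right)$ to obtain the block inequality with $H=\Theta\tilde H\Theta$ on the right, then test against $\left(\begin{array}{c}e\\ -e\end{array}\right)$ where $e$ is the minimising eigenvector of $H$ (equivalently against $\left(\begin{array}{c}w\\ -w\end{array}\right)$); this gives ${}^te\Theta(X+Y-2(2M+1)Id)\Theta e\le 4M\,{}^teHe$ and the Rayleigh quotient for $\lambda_1$ is taken with $e$ (or $w$) directly, so the factor $\Theta$ lands exactly where it is needed.

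One further point: in the $p\ge 4$ case you describe passing from $\omega''/4$ in~(\ref{p>4}) to $\omega''$ in~(\ref{mu1pgeq4}) as ``discarding the constant $\tfrac14$ since $\omega''<0$ gives a weaker bound.'' This is backwards: because $\omega''<0$, the quantity $\omega''$ is \emph{smaller} than $\omega''/4$, so replacing one by the other tightens the upper bound rather than loosening it. What justifies it is precisely the extra factor of $4$ produced by testing against $\left(\begin{array}{c}e\\ -e\end{array}\right)$, the same factor which in the $p\le 4$ case combines with $\beta_H\ge\tfrac12$ to give the constant $2$ in~(\ref{mu1pleq4}). That factor of $4$ is lost in your version because of the $\Theta$ versus $\Theta^{-1}$ confusion above.
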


\begin{proof}
 
 Note that (\ref{eqXL}) implies  that 
   $X-(2M+1) Id , Y-(2M+1)  Id  \geq  -6 M| H_1(x)| Id$
    and
   $X-(2M+1) Id, Y-(2M+1) Id \leq M |\tilde  H(x)| Id\leq {3 M\over 2} |H_1(x)|  Id $ hence 
    \begin{equation}\label{majnorm}
     |X-(2M+1) Id| , |Y-(2M+1) Id | \leq  6M  |H_1(x)| = 6M | D^2 g(x)|. 
     \end{equation}

  Muliplying equation (\ref{eqXL})  by $ \left(\begin{array}
        {cc}  \Theta(x) &0\\
        0& \Theta(x)
        \end{array}\right)$ on the right  and on the left  one gets 
 
    \begin{eqnarray*}
       \left(\begin{array}
        {cc}  \Theta(x) &0\\
        0& \Theta(x)
        \end{array}\right) &&\left(\begin{array}{cc} 
    X-(2M+1) \ Id &0\\
    0& Y-(2M+1)\ Id 
    \end{array}\right) \left(\begin{array}
        {cc} \Theta(x) &0\\
        0& \Theta(x)\end{array}\right)\\
        &\leq& M \left(\begin{array}
        {cc} \Theta(x) &0\\
        0&\Theta(x)\end{array}\right)\left(\begin{array}{cc} 
     \tilde H(x)&-\tilde  H(x)\\
    - \tilde H(x)&  \tilde H(x)
    \end{array}\right)  \left(\begin{array}
        {cc} \Theta(x) &0\\
        0&\Theta(x) \end{array}\right)\\
        &=& M \left(\begin{array}{cc} 
     H(x)&- H(x)\\
    -  H(x)&  H(x)
    \end{array}\right) 
               \end{eqnarray*}
               where $H(x) = \Theta(x) \tilde H(x) \Theta(x) = \Theta(x) (H_1(x) + {1\over 2 M |H_1(x)|} H_1^2(x))\Theta(x) $.

  To prove 1) let $ \vec v$ be any vector then multiplying by $(^t \vec v, ^t \vec v)$ on the left  of the previous inequalities  and by  $ \left(\begin{array}{cc}\vec v\\\vec v\end{array} \right)$ on the right  one gets that 
  $ ^t \vec v (\Theta(x) (X+Y-2(2M+1) Id ) \Theta(x)) \vec v \leq 0$
   which yields the desired result.     To prove 2)  using (\ref{pleq4}) let $e$ be an eigenvector for  $H(x)$, for  some eigenvalue less than
    $\beta_HN^{2-p\over 2} \left( \omega^\prime(| x|)\right)^{p-2} \omega^{\prime \prime} (|x|)$, let us multiply the right hand side of the previous inequality by 
   $(^t e, -^t e)$ on the left  and by $ \left(\begin{array}{cc}e\\-e\end{array} \right)$ on the right  one gets 
   $$ ^t e\   \Theta(x) (X+Y-2(2M+1) Id) \Theta(x)\  e \leq 4  ^t e  H(x) e $$  and then using (\ref{mu1pleq4})
    one gets 
    $$\lambda_1 (M^{p-2}  \Theta(x) (X+Y-2(2M+1) Id)\Theta(x) )\leq 2 M^{p-1}{N^{2-p\over 2}}  \omega^{\prime \prime }(|x|)  \omega^\prime (|x|)^{p-2} $$
    which yields the result  in  the case $p\leq 4$. In the case $p\geq 4$ we argue in the same manner by replacing    $\beta_HN^{2-p\over 2} \left( \omega^\prime(| x|)\right)^{p-2} \omega^{\prime \prime} (|x|)$
     by 
     the right hand side of (\ref{p>4}). 
    By  the conclusion (\ref{p>4}) in Proposition \ref{prop5}
      
       \begin{eqnarray*}
      && \lambda_1(M^{p-2} \Theta(x)(X+Y-2(2M+1) Id) \Theta(x)) \\
      &\leq& 4M^{p-1} {(1- N|x|^{2\epsilon}) \over \# I(x , \epsilon)}  \left(\omega^\prime(|x |)\right)^{p-2} |x |^{(p-4)\epsilon} 
        \left( { \omega^{\prime \prime} (|x |)\over 2} (1-N |x |^{2\epsilon})\right.\\
        &+& \left.  {3N\omega^\prime (|x | )\over 2}  |x |^{-1+2\epsilon}\right)\\
       &\leq &  M^{p-1}  \omega^{\prime \prime } (|x|)  \omega^\prime (|x|) ^{p-2} |x|^{(p-4) \epsilon}
        \end{eqnarray*}

\end{proof}
   
       \bigskip  
In the sequel we will use Proposition \ref{prop5} in the following context : 
For $x_o\in B_r$, let $M$ be a  constant to be defined later and  $\omega$ an increasing function which, near zero, behaves in the H\"older's case as $\omega(s) = s^\gamma$ and in the Lipschitz case like 
$\omega(s)=s$.  We define, borrowing ideas from \cite{IS}, \cite{BCI}, \cite{BD2}, 
 the function 
 \begin{equation}\label{psi}
     \psi(x,y) = u(x)-v(y)-\sup (u-v) - M \omega (|x-y|) -M |x-x_o|^2 -M |y-x_o|^2.
     \end{equation} 
If there exists $M$  independent on $x_o\in B_r$ such that $\psi(x,y)\leq 0$ in $B_1^2$ then the 
desired result holds. Indeed taking first $ x= x_o$ and using $|x_o-y| \leq 2$ one gets 
      $$ u(x_o)-v(y)\leq \sup (u-v)+ M(1+ 2^{2-\gamma})  \omega (|x_o-y|)$$
       and by taking secondly $y= x_o$ 
       $$ u(x)-v(x_o) \leq \sup (u-v)+ M(1+ 2^{2-\gamma})    \omega (|x_o-x|).$$
       
 In fact,  it is sufficient to prove the following : 
  
   There exists $\delta$ depending on $(r,p,  N)$  and $M$ depending on the same variables, such that for $|x-y|\leq \delta$
   $$ \psi(x,y)\leq 0.$$
Then  assuming in addition that 
$$M >1+  {4( |u|_\infty +  |v|_\infty) \over \omega (\delta)}$$ 
one gets $\psi(x,y)\leq 0$ anywhere in $B_1^2$.

  Suppose then  that 
  \begin{equation} \label{eqM}
  M(1-r)^2 > 8 (| u|_\infty + |v|_\infty), \ {\rm and} \  M >1+  {4( |u|_\infty +  |v|_\infty) \over \omega (\delta)}.
  \end{equation}      
Assume  by contradiction that the supremum of $\psi$ is positive. Then it is achieved on some $(\bar x, \bar y)$ which satisfy 
$|\bar x-x_o| , |\bar y-x_o| < {1-r\over 2}$. In particular $(\bar x,\bar y)\in  B_{1+r\over 2}^2$, hence  $\bar x$ and $\bar y$ are in the  interior of 
$B_1$, furthermore $|\bar  x -\bar y| < \delta$. Then by the consequences of Lemma \ref{lem2}  there exist
$$(q^x, X
 ) \in \overline{J} ^{2,+} u(\bar x)\ \mbox{ and}  \
     (q^y, -Y
      )\in \overline{J}^{2,-}v(\bar y)$$ with 
$q^x = M \omega^\prime (|\bar x-\bar y|) {\bar x-\bar y\over |\bar x-\bar y|} + 2M (\bar x-x_o)$
and     
     $q^y= M \omega^\prime (|\bar x-\bar y|) {\bar x-\bar y\over |\bar x-\bar y|} - 2M (\bar y-x_o)$.  
     Furthermore with the notations above, ie  $g(x) = \omega (|x|)$,  $H_1(\bar x-\bar y)= D^2 g(|\bar x-\bar y|) $, $(\Theta)_{ij}  =   \left\vert \omega^\prime (|\bar x-\bar y|) {\bar x_i-\bar y_i\over |\bar x-\bar y|}\right\vert ^{p-2\over 2}  \delta_i^j$ and  $ \tilde H(\bar x-\bar y)=(H_1(\bar x-\bar y)+ 2\iota H_1^2(\bar x-\bar y) ) $),  ($\iota = {1\over 4 M |H_1(\bar x-\bar y)|}$) 
     
        \begin{eqnarray}\label{ine}
      -   6M| H_1(\bar x-\bar y)|\left(\begin{array} {cc}
     I& 0\\
     0& I\end{array} \right) 
     &\leq& \left(\begin{array}{cc}
     X-(2M+1) Id  & 0\\
     0& Y-(2M+1) Id \end{array} \right) \nonumber \\
     &\leq&M \left( \begin{array}{cc}   \tilde H (\bar x-\bar y) & -   \tilde  H(\bar x-\bar y)\\
     - \tilde H(\bar x-\bar y)&   \tilde H
(\bar x-\bar y)     \end{array} \right) 
     \end{eqnarray} 
  Finally defining 
     $q = M \omega^\prime (|\bar x-\bar y|) {\bar x-\bar y\over |\bar x-\bar y|}$,   in each of the cases below  we will prove the following :       
            
{\bf Claims.} {\em There exists $\hat \tau >0$,  depending on $p$ (and on  $\gamma$ in the H\"older's case), and some constant $c>0$  depending on $(r,p, N)$  (and  on $\gamma$ in the H\"older's case) such that  for $\delta<1$  depending on $( r, p, N)$, and for   $|\bar x-\bar y| < \delta$     
the matrix $M^{p-2} \Theta(\bar x-\bar y) ( X+Y) \Theta(\bar x-\bar y)$ has one eigenvalue $\lambda_1$ such that
\begin{equation}\label{vpneg}
        \lambda_1(M^{p-2} \Theta(\bar x-\bar y) ( X+Y) \Theta(\bar x-\bar y)) \leq   -cM^{p-1}|\bar x-\bar y|^{-\hat \tau}
\end{equation}
$\exists c_1, \ {\rm and} \   \tau_1 < \tau,$ both depending \ on $(r,p,N)$  \ such \ that \ for\  all \   $i\geq 2$ 
  \begin{equation} \label{autresvp}
\ \lambda_i (M^{p-2} \Theta(\bar x-\bar y) (X+Y) \Theta(\bar x-\bar y)) \leq 
c_1 M^{p-1 }|\bar x-\bar y|^{-\tau_1}.
\end{equation}

There exist $\tau_2 <\hat  \tau$, and $c_2$  both depending  on  $(r,p,N) $ so that 
         
\begin{equation}\label{eqqx}
          ||q^x|^{p-2}  -|q|^{p-2}| |X| + || q^y|^{p-2}-|q|^{p-2} | |Y|\leq c_2 M^{p-1}|\bar x-\bar y|^{- \tau_2}.
\end{equation}}

 All these claims  imply that taking 
 $\delta$ small enough depending on $c, c_1, c_2$, there exists $c_3$ such that  
 $$F(q^x, X)-F(q^y, -Y) \leq -c_3 M^{p-1} |\bar x-\bar y|^{-\hat \tau}.$$

 Indeed 
 \begin{eqnarray*}
   F(q^x, X) &=& \sum_{i=1}^{i=N}  |q_i^x|^{p-2} X_{ii} \\
   &\leq& \sum_1^N |q_i|^{p-2} X_{ii} +    ||q^x|^{p-2}  -|q|^{p-2}| |X| \\
   &\leq & \sum_1^N |q_i|^{p-2} (-Y)_{ii} + M^{p-2}  \sum_1^N \lambda_i (\Theta(\bar x-\bar y) (X+Y) \Theta(\bar x-\bar y) ) \\
   & +&     ||q^x|^{p-2}  -|q|^{p-2}| |X|\\
   &\leq &   \sum_i |q_i|^{p-2} (-Y)_{ii}+ M^{p-2} 
\lambda_1(\Theta(\bar x-\bar y) (X+Y) \Theta(\bar x-\bar y)) \\
&+& M^{p-2}  \sum_{i\geq 2}  \lambda_i (\Theta(\bar x-\bar y) (X+Y) \Theta(\bar x-\bar y) ) 
+  ||q^x|^{p-2}  -|q|^{p-2}| |X|\\
 &\leq &    \sum_i |q_i^y|^{p-2} (-Y)_{ii} -c M^{p-1}|\bar x-\bar y|^{-\hat \tau}
+ (N-1)  c_1 M^{p-1}|\bar x-\bar y|^{- \tau_1} \\
&+&|| q^y|^{p-2}-|q|^{p-2} | |Y|+   ||q^x|^{p-2}  -|q|^{p-2}| |X|\\
&\leq & F(q^y, -Y)  -c M^{p-1}|\bar x-\bar y|^{-\hat \tau}
+ (N-1)  c_1 M^{p-1}|\bar x-\bar y|^{- \tau_1} + c_2 M^{p-1}|\bar x-\bar y|^{- \tau_2}\\
&\leq& F(q^y, -Y) -c_3 M^{p-1} |\bar x-\bar y|^{-\hat \tau}.
\end{eqnarray*}
as soon as 
$c_1(N-1) \delta^{\hat \tau-\tau_1} + c_2 \delta ^{\hat \tau-\tau_2} < {c\over 2}$.

 Then one can conclude using the alternative Definition \ref{altdef}  of viscosity sub- and super-solutions,  since 
 
 \begin{eqnarray*}
  f(\bar x) &\leq & F(q^x, X) \\&\leq &F(q^y, -Y) -c_3 M^{p-1} |\bar x-\bar y|^{-\hat \tau}\\
  &\leq& g(\bar y) -c_3 M^{p-1} |\bar x-\bar y|^{-\hat \tau}.
  \end{eqnarray*}
  This is clearly false as soon as $\delta$ is small enough  since $f$ and $g$ are bounded. 
  
   So  in order to get the H\"older and Lipschitz regularity in the case $p\leq 4$ and $p\geq 4$ it is sufficient to prove (\ref{vpneg}), (\ref{autresvp}) and (\ref{eqqx}) in any cases, 
   note that the cases $p\geq 4$ will also  require to check    (\ref{eqNepsilon}).

      Note that (\ref{eqqx}) will always be a consequence of (\ref{ZT}) below and of (\ref{majnorm}): 
 
  For any $\theta \in ]0, \inf (1,(p-2)]$, and for any $Z, \ T\in \R^N$,     
    \begin{equation}\label{ZT}
       ||Z|^{p-2} -|T|^{p-2} | \leq \sup (1,p-2)  |Z-T|^{\theta }(|Z| + |T|)^{p-2-\theta} .
       \end{equation}
       
       This is obtained   for $p-2\leq 1$,  from   the inequality $||Z|^{p-2}-|T|^{p-2} | \leq |Z-T|^{p-2}$,  
       and  for  $p\geq 3$, using the  mean values 's Theorem.

    \subsection{Proofs of (\ref{vpneg}), (\ref{autresvp}) and (\ref{eqqx}) in the H\"older's  case and $p\leq 4$}

  Let  $r<1, $ $\gamma$ be some number  in $ ]0,1[$, and $\omega (s) = s^\gamma$.  
   $\psi$ is defined by (\ref{psi}) and $M$ will be chosen large later independently  on $x_o$, but depending on $r$ , $\gamma$,  $p$,  $N$, $|f|_\infty$, $|g|_\infty$, $|u|_\infty$ and $|v|_\infty$ . 
    
    Note that  below  even if we did not  always make it explicit for simplicity,  some of the  constants depend on ${1\over 1-\gamma}$  and then  the Lipschitz result   cannot be derived immediately from the H\"older's one by letting $\gamma$ go to $1$. 
    
    Let us recall that 
     we want to prove that  there exists $\delta$ depending on $(r,p, N)$  and $M$ depending on the same variables, and on $|f|_\infty$, $|g|_\infty$, $|u|_\infty$ and $|v|_\infty$,  such that for $|x-y|\leq \delta$
   $$ \psi(x,y)\leq 0.$$
Recall that we impose  (\ref{eqM}), ie   $  M (1-r)^2 > 8(|u|_\infty +|v|_\infty),\ {\rm and } \ M > 1+ { 4(|u|_\infty +  |v|_\infty) \over \omega (\delta)}.$

      Let $(\bar x, \bar y)$ be  some couple   in $(\overline{B_1})^2$ on which the supremum is positive and achieved.  Clearly $\bar x\neq \bar y$,     
    and from the assumptions on $M$, $(\bar x, \bar y)\in B_{1+r\over 2} \times B_{1+r\over 2}$, and   $ |\bar x-\bar y| <\delta$. 
 Here   
 
   $$q^x =\gamma M  |\bar x-\bar y|^{\gamma-2} (\bar x-\bar y) + 2M ( \bar x-x_o),\ q^y =\gamma  M|\bar x-\bar y|^{\gamma-2} (\bar x-\bar y) -2M  (\bar y-x_o). $$
 and 
  $$q=M \omega^\prime (|\bar x-\bar y|) {\bar x-\bar y\over |\bar x-\bar y|} = \gamma M  |\bar x-\bar y|^{\gamma- 2} (\bar x-\bar y). $$ 
      In particular
      $$ |q| = M\gamma |\bar x-\bar y|^{\gamma-1}$$
       and for $\delta^{1-\gamma} < {\gamma\over 8}$ one has 
       $$ {|q|\over 2} \leq |q^x|, |q^y|  \leq {3|q|\over 2}.$$
       Note that $\Theta $ defined in the previous sub-section is also  the matrix 
   $$ \Theta_{ij}  (\bar x-\bar y) = \left\vert{q_i\over M} \right\vert ^{p-2\over 2} \delta_i^j     $$

  Furthermore $|H_1(\bar x-\bar y)| = |D^2 g(|\bar x-\bar y|)| \leq |\omega^{\prime \prime } (|\bar x-\bar y|) |+ (N-1) 
  {\omega^\prime (\bar x-\bar y)\over |\bar x-\bar y|}= \gamma (\gamma+ N-2) |\bar x-\bar y|^{\gamma-2}$,
   and  so by (\ref{majnorm})   and for $\delta$ small enough  
   
    \begin{equation}\label{majnormhold}|X|+ |Y| \leq c M |\bar x-\bar y|^{\gamma-2}.
    \end{equation}  
    
    Using  (\ref{mu1pleq4}) one has 
    $ \lambda_1 (M^{p-2} \Theta(\bar x-\bar y) (X+Y-2(2M+1)  Id )\Theta(\bar x-\bar y)) \leq c(M \gamma)^{p-1} (\gamma-1) |\bar x-\bar y|^{(\gamma-1)(p-2) + \gamma-2}$, 
     Hence  
     \begin{eqnarray*}
     \lambda_1(M^{p-2} \Theta(\bar x-\bar y) (X+Y) \Theta(\bar x-\bar y)) &\leq& -cM^{p-1} (1-\gamma) \gamma^{p-1} |\bar x-\bar y|^{(\gamma-1)(p-2) + \gamma-2}\\
     &+ & 2(2M+1) M^{p-2}  |\Theta(\bar x-\bar y) |^2\\
     & \leq & -{c\over 2}M^{p-1}  (1-\gamma) \gamma^{p-1} |\bar x-\bar y|^{(\gamma-1)(p-2) + \gamma-2} 
     \end{eqnarray*}
      as soon as $\delta^{2-\gamma} < {c(1-\gamma) \gamma^{p-1}\over 12}  $. Hence    
     (\ref{vpneg}) holds with $\hat \tau = (1-\gamma) (p-2) + 2-\gamma$.  
     
     \medskip

    On the other hand  by (\ref{autresvp1})
     $\lambda_i (M^{p-2} \Theta(\bar x-\bar y) (X+Y) \Theta(\bar x-\bar y)) \leq 2(2M+1) M^{p-2}   |\Theta |^2((\bar x-\bar y)) \leq 2(2M+1)  M ^{p-2}  \gamma^{p-2} |\bar x-\bar y|^{(\gamma-1)(p-2)}$
      and then (\ref{autresvp})  holds with 
      $\tau_1 = (1-\gamma) (p-2)$.

         \medskip

       To check (\ref{eqqx}) let us use (\ref{ZT}) and (\ref{majnormhold}) to get 
       $$||q^x|^{p-2} -|q|^{p-2}| |X| \leq c_2M^{p-1}  |\bar x-\bar y|^{(\gamma-1)(p-2-\theta) + \gamma-2}$$
        where $\theta  = \inf (1, p-2)>0$ and then 
         (\ref{eqqx}) holds with 
         $\tau_2 = (1-\gamma) (p-3)^+ + 2-\gamma$.

       \subsection{Proof of (\ref{eqNepsilon}), (\ref{vpneg}), (\ref{autresvp}), (\ref{eqqx} ) in the H\"older's case and $p\geq 4$}   
       
        $\omega$ is the same as in the     H\"older's case and $p\leq 4$.   
   We define 
       \begin{equation}\label{eq21}
       \epsilon = {(1-\gamma)\over 2(p-4)},\ \delta_N = \exp ({-\log (2N(4-\gamma))+ \log (1-\gamma) \over 2\epsilon} ),       \end{equation}
          and assume $\delta < \delta_N$. One still  suppose (\ref{eqM}).
          
In    particular    since there exists $i\in [1,N]$ such that $|\bar x_i-\bar y_i|^2 \geq{ |\bar x-\bar y|^2\over N}\geq |\bar x-\bar y|^{2+ 2\epsilon} $, for $p\geq 4$, using the definition of $\delta_N$ in  (\ref{eq21}),  $I(\bar x-\bar y,\epsilon)\neq \emptyset$. 
           Furthermore  for $|\bar x-\bar y| < \delta \leq \delta_N$
             \begin{equation}\label{eq22} 
            \sum_{i\in I} |\bar x_i-\bar y_i|^2 \geq |\bar x-\bar y|^2  (1-{1-\gamma\over 2N(4-\gamma)})\geq {3\over 4} |\bar x-\bar y|^2
            \end{equation}
            and
              \begin{eqnarray}
              \label{eq20} 
          {1\over 2}   \omega^{\prime \prime } (|\bar x-\bar y|)(1-N |\bar x-\bar y|^{2\epsilon} ) &&+ {3N\over 2}  |\bar x-\bar y|^{2\epsilon} {\omega^\prime (|\bar x-\bar y|)\over |\bar x -\bar y|}\nonumber\\
                     & \leq &{1\over 2}   \omega^{\prime \prime } (|\bar x-\bar y|) + |\bar x-\bar y|^{2\epsilon} ( {N\over 2} \gamma (1-\gamma) + {3N\over 2} \gamma) |\bar x-\bar y|^{\gamma-2}\nonumber \\
                     &\leq &  {1\over 4} \gamma (\gamma-1) (|\bar x-\bar y|)^{\gamma-2}  \nonumber\\
           &\leq &{\omega^{\prime \prime} (|\bar x-\bar y)\over 4} ,
            \end{eqnarray} 
   and then (\ref{eqNepsilon}) is satisfied.  Using (\ref{eq20}), (\ref{eq22}),  and (\ref{mu1pgeq4}) one gets 
   $$ \lambda_1(M^{p-2} \Theta(\bar x-\bar y) (X+Y-2(2M+1) Id) \Theta(\bar x-\bar y) ) \leq cM^{p-1}  \omega^{\prime \prime } (|\bar x-\bar y|) \omega^{\prime } (|\bar x-\bar y|)^{p-2}   |\bar x-\bar y|^{(p-4) \epsilon}$$
   and then 
    \begin{eqnarray*}
     \lambda_1(M^{p-2} \Theta(\bar x-\bar y) (X+Y) \Theta(\bar x-\bar y)) &\leq& -cM^{p-1} (1-\gamma) \gamma^{p-1} |\bar x-\bar y|^{(\gamma-1)(p-2) + \gamma-2+ (p-4) \epsilon}\\
     &+ & 2(2M+1) M^{p-2}  |\Theta(\bar x-\bar y) |^2\\
     &\leq & -cM^{p-1} (1-\gamma) \gamma^{p-1} |\bar x-\bar y|^{(\gamma-1)(p-2) + \gamma-2+ (p-4) \epsilon}\\
     &+ &(2(2M+1) M^{p-2} |\bar x-\bar y|^{(\gamma-1)(p-2)}\\
     & \leq & -{c\over 2}M^{p-1}  (1-\gamma) \gamma^{p-1} |\bar x-\bar y|^{(\gamma-1)(p-2) + \gamma-2+ (p-4) \epsilon} 
     \end{eqnarray*}
      as soon as $\delta$  is small enough, by the choice  of $\epsilon$ in (\ref{eq21}). 
   
   Then (\ref{vpneg}) holds with 
$\hat \tau = (1-\gamma) (p-2) + 2-\gamma -(p-4)\epsilon>0$, by  (\ref{eq21}) while  (\ref{autresvp}) holds  with $\tau_1 = (1-\gamma) (p-2)$ since 
$M^{p-1}  |\Theta(\bar x-\bar y) |^2 \leq c_1 M^{p-1} |\bar x-\bar y|^{(\gamma-1) (p-2)} $
and   $(1-\gamma)(p-2) <  (1-\gamma)(p-2) + (2-\gamma)-(p-4) \epsilon$. 
 
   Finally using 
   (\ref{ZT}) and (\ref{majnormhold}) 
                 
                 \begin{eqnarray*}
               ||q_i^ x|^{p-2} -|q_i|^{p-2} | |X_{ii} |& \leq&  (p-2)|q_i^x-q_i| (|q_i ^x | + |q_i|)^{p-3}c_1M |\bar x-\bar y|^{\gamma-2} \\\
               &\leq &
             c_2  M^{p-1} |\bar x-\bar y|^{(\gamma-1)(p-3)+ \gamma-2 }
             \end{eqnarray*} 
              and then 
               (\ref{eqqx}) is satisfied with  $\tau_2 =    (1-\gamma)(p-3)+2- \gamma <\hat \tau$ by (\ref{eq21}).

      \begin{rema} 
       Suppose that $u= v$ and $f= g$.                                         
     From the previous proof one gets that  for any $\gamma \in ]0,1[$,  there exists some constant $C_{p,\gamma,  N, r}$ such that if $u$ is a solution of (\ref{eq1}) in $B_1$,  such that $|u|_\infty \leq 1$ and $|f|_\infty \leq 1$,
     $ |u|_{{\cal C}^{0, \gamma}(B_r)}\leq C_{p,\gamma, N, r}$. 
     Let now  $u$  be a  bounded  solution in the ball $B_1$,  then $v = {u\over |u|_\infty + |f|_\infty^{1\over p-1}}$ satisfies the equation, with  $|v|_\infty \leq 1$ and the right hand side $\tilde f  = {f\over ( |u|_\infty + |f|_\infty^{1\over p-1})^{p-1}}$.   Then 
      $|u|_{{\cal C}^{0, \gamma}(B_r)}\leq C_{p, \gamma, N , r} (|u|_\infty + |f|_\infty^{1\over p-1})$.
\end{rema}

\subsection{Proof of (\ref{vpneg}), (\ref{autresvp}) and (\ref{eqqx}) in the Lipschitz case and $p\leq 4$}

                We choose $\tau \in ]0,  \inf ({1\over 2},{ p-2\over 2})[$,  $\gamma> {\tau\over \inf ({1\over 2},{ p-2\over 2})}$,  $\gamma<1$,  and   define 
                $ \omega(s) = s-\omega_o s^{1+\tau}$
                 where $s < s_o=\left( {1\over (1+\tau) \omega_o}\right)^{1\over \tau}$ and $\omega_o$ is chosen so that  $s_o>1$. 
                 We suppose that $\delta^\tau \omega_o (1+\tau) < {1\over 2}$, which ensures that 
                 \begin{equation}\label{omegaprime}
                 {1\over 2} \leq \omega^\prime (s) < 1, \ {\rm for} \ s < \delta
                 \end{equation}
                  and by the mean value's theorem, for $s < \delta$, 
 $\omega(s) \geq {s\over 2} $. Note that $\omega$ is globally Lipschitz continuous. We recall that  by (\ref{eqM}) we choose  $M(1-r)^2 > 8 (|u|_\infty+ |v|_\infty)$  and  
$ M >1+  {8(|u|_\infty|+ |v|_\infty) \over \delta}  . $
 
 Suppose that $(\bar x, \bar y)$ is a  pair on which the supremum of $\psi$ is achieved $>0$. As in the previous subsections, $(\bar x, \bar y)\in B_{1+r\over 2}^2$ and 
 $|\bar x-\bar y| < \delta$. 
  Here  $q^x = M \omega^\prime (|\bar x-\bar y|) {\bar x-\bar y\over |\bar x-\bar y|}+ 2M (\bar x-x_o), \            
                  q^y = M \omega^\prime (|\bar x-\bar y|){\bar x-\bar y\over |\bar x-\bar y|} + 2M (\bar y-x_o)$, and we also define 
                  $ q= M\omega^\prime (|\bar x-\bar y|){\bar x-\bar y\over |\bar x-\bar y|} $.  In particular ${M\over 2} \leq |q| \leq M$. 
       Note that 
since the solution has been proven to be H\"older  in $B_{1+r\over 2}$ for 
all $\gamma <1$, for some constant $c_{p, \gamma, N, r}$, 
  from $\psi(\bar x, \bar y) \leq 0$ and $\omega(|\bar x-\bar y|) \geq 0$ one gets 
$M|\bar x-x_o|^2+M[\bar y-x_o|^2 \leq c_{p, \gamma,N, r}  |\bar x-\bar y|^\gamma$
and  then  

\begin{equation}\label{x-xo}
 |\bar x-x_o|\leq \left({c_{p, \gamma,N, r} |\bar x-\bar y|^\gamma\over M}\right)^{1\over 2}.
 \end{equation} 
  and an analogous estimate holds for $|\bar y-x_o|$,  then  taking 
 $\delta$ small enough,  by (\ref{omegaprime}), 
                                   
                                   \begin{equation}\label{ineqx}
                                   {M\over 4} \leq |q^x|, |q^y| \leq {5M\over 4}
                                   \end{equation}
                                   
                                    To prove (\ref{vpneg}) let us observe that   here one has 
                                    $$(\omega^\prime (|\bar x-\bar y|) )^{p-2} \omega^{\prime \prime } (|\bar x-\bar y|) \leq -c |\bar x-\bar y|^{\tau-1}$$ and using 
                                    $M^{p-2} |\Theta|^2 \leq M^{p-2}$,  then by (\ref{mu1pleq4}) ,   for $\delta$ small enough,  
                                   (\ref{vpneg}) holds with $\hat \tau = 1-\tau$, while (\ref{autresvp}) holds with $\tau_1 = 0$.

                                   \medskip
                                   
                                    To check (\ref{eqqx}), 
                                    let us observe that here 
                                    $ \vert D^2 g(|\bar x-\bar y|)\vert \leq  |\omega^{\prime \prime} (|\bar x-\bar y|)|+  (N-1) {\omega^\prime (|\bar x-\bar y|) \over |\bar x-\bar y|}\leq  {c\over |\bar x-\bar y|}$ and then using 
                                   ( \ref{ine})
                                   
                                     $$ |X-(2M+1)\ Id| + |Y-(2M+1) \ Id | \leq c{M\over |\bar x-\bar y|}$$
                                   
                                      Hence for $\delta$ small enough one also has 
                                       \begin{equation}\label{inelip} |X| + |Y| \leq c{M\over |\bar x-\bar y|}   \end{equation}
                                    
                                   and then by (\ref{ZT}),  (\ref{ineqx}),   (\ref{x-xo}),  and  (\ref{inelip})
            $$ |\ |q^x|^{p-2}-|q |^{p-2} | \ |X| \leq   c M^{p-1}c_{p, \gamma,N, r}^{\sup (1, p-2) \over 2}    |\bar x-\bar y| ^{\inf (1, p-2)  \gamma \over 2}  |\bar x-\bar y|^{-1}$$ 
hence 
 (\ref{eqqx}) holds with 
 $\tau_2 = 1- {\inf (1, p-2) \gamma\over 2} < 1-\tau$ by the choice of $\gamma$.

            \subsection { Proofs of (\ref{eqNepsilon}),  (\ref{vpneg}), (\ref{autresvp}), (\ref{eqqx}) in the Lipschitz case and $p \geq 4$}
             
             We take the same function $\psi$ and $\omega$ as in the Lipschitz case and $p\leq 4$, with   still 
             $M(1-r)^2 > 8 (|u|_\infty+ |v|_\infty)$  and  
$
 M > 1+{8(|u|_\infty|+ |v|_\infty )\over \delta}  . 
$
              We choose   also 
              \begin{equation}\label{epsilonlip}
              0< \tau  <  {1\over (p-2)}, \   1>\gamma > \tau(p-2), \ {\rm and }  \ 
              {\tau\over 2} < \epsilon<{{\gamma\over 2}-\tau\over ( p-4)} . 
              \end{equation} 
              (Recall that it has been proven  in Subsection 3.2 that $u$ is $\gamma$- H\"older continuous   in $B_{1+r\over 2}$). 
        Let us define $\omega$, $s_o$, $g$ $\psi$, as in the case $p\leq 4$,  $\Theta$, $\tilde H$ and $H$ 
         are as in the Subsection 3.1 .             
            We  define  
            \begin{eqnarray}\label{deltaN}
            \delta_N &=&
            \inf \left(    \exp ({\log (\omega_o (1+\tau) \tau)-\log (2N(\omega_o \tau (1+ \tau)+3))\over 2\epsilon-\tau},\right.\nonumber\\
            &&\left.  \exp\left( {-\log (2 \omega_o (1+ \tau))\over \tau}\right)\right). 
                      \end{eqnarray}
                      
                       Note that $\delta_N \leq \exp\left( { -\log (2N)\over 2\epsilon}\right)$. 
                 If we suppose  that $\sup_{(x,y)\in B_1^2} \psi(x, y)>0$,  it is achieved on some $(\bar x, \bar y) $ which satisfies 
             $\bar x\neq \bar y$,  
            $(\bar x, \bar y)\in B_{1+r\over 2} \times B_{1+r\over 2} $ and 
            $|\bar x-\bar y| \leq \delta_N$. In particular  since there exists $i$ such that 
            $|\bar x_i-\bar y_i|^2 \geq {1\over N} |\bar x-\bar y|^2 \geq|\bar x-\bar y|^{2+ 2\epsilon} $, by (\ref{deltaN}),  $I(\bar x-\bar y, \epsilon)\neq \emptyset$, and 
            \begin{eqnarray*}
            \sum_{i\in I(\bar x-\bar y, \epsilon) }|\bar x_i-\bar y_i|^2&=&|\bar x-\bar y|^2-\sum_{i\notin I(\bar x-\bar y, \epsilon)} |\bar x_i-\bar y_i|^{2}\\
            &\geq& |\bar x-\bar y|^2-N |\bar x-\bar y|^{2+ 2\epsilon}\\
            & \geq& {1\over 2} |\bar x-\bar y|^2 .
            \end{eqnarray*}
             Furthermore recall that  by (\ref{deltaN}), $1\geq \omega^\prime (|\bar x-\bar y|) \geq {1\over 2}$ and  
             \begin{eqnarray*}
              {1\over 2} \omega^{\prime \prime } (|\bar x-\bar y|) &+&
                {N\over 2}  \omega_o \tau (1+ \tau) |\bar x-\bar y|^{\tau-1+ 2\epsilon} +  {3\over 2} N |\bar x-\bar y|^{2\epsilon-1} \omega^\prime (|\bar x-\bar y|) \\
                &\leq & {1\over 2} \omega^{\prime \prime } (|\bar x-\bar y|)+ {N\over 2} ( \omega_o \tau (1+ \tau)  +3) |\bar x-\bar y|^{2\epsilon-1} \\
                &\leq& 
              -{1\over 4} \omega_o (1+\tau) \tau |\bar x-\bar y|^{-1+ \tau } \\
           & = & { \omega^{\prime \prime } (|\bar x-\bar y|)\over 4}. 
            \end{eqnarray*}
             and then (\ref{eqNepsilon}) holds.  This implies that the right hand side of (\ref{mu1pgeq4}) is for $x:= \bar x-\bar  y$ less than 
             $$cM^{p-1}  \omega^{\prime} (|\bar x-\bar y|)^{p-2} \omega^{\prime \prime } (|\bar x-\bar y|) |\bar x-\bar y|^{p-4}\leq -c M^{p-1} |\bar x-\bar y|^{-\hat \tau}$$
             
                                where by (\ref{epsilonlip}) 
                   $\tau-1+ (p-4) \epsilon:= -\hat \tau <0$, 
                                                      and using $4M^{p-1} |\Theta(\bar x-\bar y) |^2 \leq c M^{p-1}$ one gets (\ref{vpneg}) for $\delta$ small enough.

                                                   Always by   (\ref{autresvp1})  and since 
         $M^{p-1}   |\Theta(\bar x-\bar y) \  {\rm Id}  \  \Theta(\bar x-\bar y)|\leq    M^{p-1}$,     (\ref{autresvp}) is satisfied with $\tau_1=0$.
                         
                   There remains to prove (\ref{eqqx}).  
  For that aim  observe that 
 there exists $c_{p, \gamma,N,  r}$ so that $M |\bar x-x_o|^2 \leq c_{p, \gamma, N, r} |\bar x-\bar y|^ {\gamma}$. 
 Therefore,  using (\ref{ZT}), and (\ref{inelip}), one gets 
 \begin{eqnarray*}
  |\ |q^x |^{p-2} -|q|^{p-2}|\ |X |&\leq&(p-2)  |q^x -q | (|q^x|+ |q|)^{p-3} |X|\\
 &\leq& c_2   M^{p-1} |\bar x-\bar y|^{{\gamma\over 2}-1}\\
 \end{eqnarray*}
  Hence (\ref{eqqx}) holds with $\tau_2 =1-{ \gamma\over 2} < 1-\tau-(p-4) \epsilon$ by  (\ref{epsilonlip}).

     {\bf Conclusion} 
     
     From the previous proof one gets  in the particular case where $u=v$ and $f=g$ that  there exists some constant $C_{p, N, r}$ such that if $u$ is a solution in $B_1$,  such that $|u|_\infty \leq 1$ and $|f|_\infty \leq 1$,
     ${\rm lip }_{B_r} u\leq C_{p,N, r}$. 
     Let now  $u$  be a  bounded  solution in the ball $B_1$,  then $v = {u\over |u|_\infty + |f|_\infty^{1\over p-1}}$ satisfies the equation, with  $|v|_\infty \leq 1$ and the right hand side $\tilde f  = {f\over ( |u|_\infty + |f|_\infty^{1\over p-1})^{p-1}}$.   Then 
      ${\rm lip}_{B_r}  u\leq C_{p, N , r} (|u|_\infty + |f|_\infty^{1\over p-1})$.

\bigskip

 {\em Acknowledgment : The author wishes to thank Guillaume Carlier and Lorenzo Brasco for fruitful discussions on this subject,  and the anonymous referees  for their judicious remarks which permit to considerably improve this  paper. }


\begin{thebibliography}{99}

 \bibitem{BCI} G. Barles,  E. Chasseigne, C. Imbert, {\it H\"older continuity of solutions of second-order non-linear elliptic integro-differential equations},  J. Eur. Math. Soc. 
 vol {13} (2011),  p 1-26.

\bibitem{BK} M. Belloni B. Kawohl  {\em The  Pseudo p-Laplace  eigenvalue problem and viscosity solutions},  
ESAIM COCV, vol. 10, (2004), p 28-52. 

\bibitem{BD3} I. Birindelli, F. Demengel, {\em  ${\cal C}^{1,\beta}$ regularity for Dirichlet problems associated to fully nonlinear degenerate elliptic 
equations},  ESAIM  COCV, vol  20, Issue 40, (2014), p. 1009-1024.  et  https://hal.archives-ouvertes.fr.hal-01076713.


\bibitem{BD2} I. Birindelli, F. Demengel, {\em  Existence and regularity result for sub and super-solutions   of pseudo-Pucci's operators},   work in  preparation.

 \bibitem{BB}
 P. Bousquet, L. Brasco {\em  Global  Lipschitz Continuity
 for Minima of degenerate  Problems},   Arxiv : 1504 06 101

\bibitem{BBJ}
P. Bousquet,   L. Brasco, V. Julin, {\em Lipschitz regularity for local minimizers of some widely degenerate problems},  Calc. of Variations and Geometric Measure theory. http://cvgmt.sns.it/paper/2515/
 
 
\bibitem{BC} L.  Brasco, G. Carlier, {\em
   On certain anisotropic elliptic equations arising in congestion optimal transport : Local gradient bounds }, 
  Advances in Calculus of Variations, Vol 7,  (2014), Issue 3,p 379-407. 




\bibitem{CC} X. Cabr\'e, L. Caffarelli,   Fully-nonlinear equations, 
Colloquium Publications 43, American Mathematical Society, Providence, RI,1995.



\bibitem{CF} M. Colombo, A. Figalli,  {\em Regularity Results  for  very  degenerate  elliptic  equations}, J. Math. Pures Appl. (9), 101 (2014), no. 1, 94-117.

\bibitem{usr} { M.G. Crandall,  H. Ishii, P.L. Lions},  {\em User's guide to
viscosity solutions of second order partial differential equations}, Bull.
Amer. Math. Soc. (N.S.) 27 (1992), no. 1, p 1-67.
\bibitem{CKLS} {M. Crandall, Kocan, P.L. Lions, Swiech},  {\em On viscosity solutions of Fully Nonlinear Equations with Measurable Ingredients. 
} Communications on pure and applied Mathematics, Vol. XLIX, (1996), p 365-397 . 
\bibitem{DB} E. Di Benedetto,  {\em ${\cal C}^{1+\beta}$  local regularity of weak solutions of degenerate elliptic equations,} Nonlinear  Analysis, Theory,  Methods  and  Applications, Vol. 7. No. 8. pp. 827-850, 1983.


\bibitem{FFM} I. Fonseca, N. Fusco,  P. Marcellini, 
{\em An existence result for a non convex variational problem via regularity}, ESAIM: Control, Optimisation and Calculus of Variations, Vol. 7, ( 2002),  p 69-95. 
\bibitem{G} E. Giusti,  Direct methods in the calculus of variations. World Scientific Publishing Co., Inc., River Edge, NJ, 2003. 

\bibitem{IS} C. Imbert, L. Silvestre,  {\em ${\cal C}^{1, \alpha}$   regularity of solutions of degenerate fully non-linear elliptic equations} , Adv. Math. vol 233, (2013), p 196-206.

\bibitem{I1} {H. Ishii},  {\it Viscosity solutions  of Nonlinear Partial Differential equations} Sugaku Expositions , Vol 9, number 2, December 1996. 
\bibitem{IL}{H. Ishii, P.L. Lions}, {\it  Viscosity solutions of Fully-Nonlinear Second  Order Elliptic Partial Differential Equations}, {J. Differential Equations},  vol 83,   (1990),  p 26-78.


\bibitem{T} P. Tolksdorff , {\em Regularity for a more general class of quasilinear elliptic
equations}, J. Differential Equations, 51 (1984), 126-150.


 \bibitem{UU}N.  Uraltseva, N.  Urdaletova,  {\it  The boundedness of the gradients of generalized solutions of degenerate quasilinear nonuniformly elliptic equations}, Vest. Leningr. Univ. Math., 16 (1984), 263-270.
 
          \end{thebibliography}
             \end{document}